\newcommand{\define}[2]{{\em #1}\index{#2}}
\setlist{itemsep = 0pt}
\setlist[enumerate]{leftmargin=*} 
\setlist[enumerate, 1]{label=\upshape (\arabic*), ref=(\arabic*)}
\providecommand{\cal}{\mathcal}
\renewcommand{\Bbb}{\mathbb}
\newenvironment{pf}{\begin{proof}}{\end{proof}}
\newcommand{\Aaa}{{\cal{A}}}
\newcommand{\Ef}{{\cal{F}}}
\newcommand{\Gee}{{\cal{G}}}
\newcommand{\Kay}{{\cal{K}}}
\newcommand{\Zee}{{\Bbb{Z}}}
\newcommand{\Nat}{{\Bbb{N}}}
\newcommand{\Qyu}{{\Bbb{Q}}}
\newcommand{\Wu}{{\mathbb{W}}}
\newcommand{\al}{\alpha}
\newcommand{\sig}{\sigma}
\renewcommand{\phi}{\varphi}
\renewcommand{\rho}{\varrho}
\newcommand{\rest}{\restriction}
\newcommand{\ntr}{{n\in\omega}}
\newcommand{\loe}{\leq}
\newcommand{\goe}{\geq}
\newcommand{\subs}{\subseteq}
\newcommand{\sups}{\supseteq}
\newcommand{\ovr}{\overline}
\newcommand{\Cl}[1]{\overline{#1}}
\newcommand{\id}[1]{{\operatorname{id}_{#1}}} 
\newcommand{\rng}{\operatorname{rng}}
\newcommand{\by}[1]{/_{#1}}
\newcommand{\Fin}{\operatorname{Fin}}
\newtheorem{tw}{Theorem}[section]
\newtheorem{wn}[tw]{Corollary}
\newtheorem{lm}[tw]{Lemma}
\newtheorem{prop}[tw]{Proposition}
\theoremstyle{definition}
\newtheorem{df}[tw]{Definition}
\newtheorem{question}[tw]{Question}
\theoremstyle{remark}
\newtheorem{uwgi}[tw]{Remark}
\newcommand{\set}[1]{\{#1\}}
\newcommand{\setof}[2]{\{#1\colon #2\}}
\newcommand{\seq}[1]{\langle #1 \rangle}
\newcommand{\sett}[2]{\{#1\}_{#2}}
\newcommand{\sn}[1]{\{#1\}} 
\newcommand{\dn}[2]{\{#1,#2\}} 
\newcommand{\pair}[2]{\langle #1, #2 \rangle} 
\newcommand{\map}[3]{#1\colon #2 \to #3} 
\newcommand{\img}[2]{#1[#2]} 
\newcommand{\dpower}[2]{[#1]^{#2}}
\newcommand{\fin}[1]{[#1]^{<\omega}}
\newcommand{\fra}{Fra\"iss\'e}
\newcommand{\jon}{J\'onsson}
\newcommand{\U}{\mathbb U}
\newcommand{\M}{\mathbb M}
\providecommand{\nat}{\omega}
\newcommand{\aut}[1]{\operatorname{Aut}#1}
\newcommand{\iso}{\approx}
\newcommand{\emb}[1]{{\mathcal{E}{#1}}}
\newcommand{\cmp}{\circ} 
\newcommand{\separator}{\begin{center}
	*****
\end{center}}
\newcommand{\koment}[1]{}
\providecommand{\ar}{\arrow}
\newcommand{\cM}{\mathcal{M}}
\newcommand*{\card}[1]{\lvert#1\rvert} 
\newcommand*{\maps}{\colon} 
\title{Uncountable homogeneous structures}
\author{
{\sc Adam Barto\v{s}}\\
{\small Institute of Mathematics, Czech Academy of Sciences (CZECH REPUBLIC)} \\[-0.5ex]
{\small \tt bartos@math.cas.cz}
\and
{\sc Wies{\l}aw Kubi\'s}\\
{\small Institute of Mathematics, Czech Academy of Sciences (CZECH REPUBLIC)} \\[-0.5ex]
{\small \tt kubis@math.cas.cz}
}
\date{\clocktime\today}
\begin{document}

\maketitle

\begin{abstract}
	We study the existence of uncountable first-order structures that are homogeneous with respect to their finitely generated substructures. In many classical cases this is either well-known or follows from general facts, for example, if the language is finite and relational then ultrapowers provide arbitrarily large such structures. On the other hand, there are no general results saying that uncountable homogeneous structures with a given age exist. We examine the monoid of self-embeddings of a fixed countable homogeneous structure and, using abstract \fra\ theory, we present a method of constructing an uncountable homogeneous structure, based on the amalgamation property of this monoid.
	
	\ 
	
	\noindent
	{\bf MSC:} 08A35, 
		03C50. 

	\noindent
	{\bf Keywords:} Homogeneity; automorphism group; extensible embedding.
	
\end{abstract}

\newpage

\tableofcontents

\section{Introduction}

A first-order mathematical structure $\U$ is \emph{homogeneous}\footnote{Sometimes the adjective \emph{ultra-} is added to this definition in order to distinguish it from point homogeneity.} if every isomorphism between finitely generated substructures of $\U$ extends to an automorphism of $\U$. If such a $\U$ is additionally countably generated, it is uniquely determined up to isomorphism with respect to its age, by the classical \fra\ theory~\cite{Fraisse}, \cite{Hodges}. A natural extension of  \fra\ theory to uncountable structures was developed by \jon~\cite{Jon} and then complemented by Morley and Vaught~\cite{MorleyVaught}, obtaining uniquely determined structures with much stronger homogeneity. Namely, in these strongly homogeneous structures every isomorphism between substructures of smaller cardinality extends to an automorphism.
However, in most cases a cardinal arithmetic assumption of the form $2^{<\kappa} = \kappa$ is required for the existence of such structures.

In this note we present a new general method of constructing uncountable homogeneous structures, based on the properties of the monoid of self-embeddings of a fixed \fra\ limit.

Closely related to homogeneity is the extension property, sometimes called \emph{injectivity}, namely, given a finitely generated structure $B$ and its finitely generated substructure $A$, every embedding of $A$ extends to an embedding of $B$.
While a countable injective structure is homogeneous, this is not necessarily the case for uncountable ones. In fact, the other extreme may occur: An uncountable injective structure may be rigid, that is, its automorphism group may be trivial or minimal possible (for instance, it may happen that a non-trivial involution is always an isomorphism).
There are many well-known constructions of rigid injective structures, e.g., linear orders (rigid dense subsets of the real line, see~\cite{DroTru}, \cite{Scott}), graphs, Boolean algebras, etc. (general consistent examples are provided in~\cite{Ziemek}).

Given a homogeneous structure $\U$, let $\emb \U$ denote the monoid of all embeddings of $\U$ into itself, where the monoid operation is the composition of functions. It may happen that $\emb \U$ is trivial, that is, it only consists of the identity (equivalently, $\U$ is rigid -- see \ref{rmk:rigid}), or that $\emb \U$ is degenerate, i.e. it coincides with $\aut U$. A simple example of such a $\U$, in the relational language consisting of infinitely many unary predicates, is any structure in which each element belongs to exactly one of these predicates. Switching to an equivalent language of point colorings: each element has a unique color and each color appears at most once. The \fra\ limit of the class of all finite structures with this property is an infinite structure where all colors appear. Clearly, the only embedding of this structure into itself is the identity.
It turns out that such an example with a degenerate monoid of self-embeddings may be constructed even in the language of groups, see Example~\ref{SubSectgoijwegijw} below.

We address the complementary situation. Given a countable homogeneous structure (a \fra\ limit) $\U$ such that $\emb \U \ne \aut{\U}$, that is, at least one self-embedding $\map e \U \U$ is not an isomorphism, we consider building an uncountable structure as an increasing union of copies of $\U$.
It is well-known that \fra\ limits are stable under unions of countable chains. Therefore, using transfinite compositions of $e$, one can easily construct an uncountable structure with the same class (up to isomorphism) of finitely generated substructures as $\U$, satisfying the extension property (for details, see Proposition~\ref{thm:necessary} below).
Nevertheless, in such general constructions one cannot control automorphisms and one can even end up with a rigid structure with the extension property. 

Recall that the \emph{amalgamation property} says that two embeddings with the same domain can be completed by embeddings to a commutative square.
The \emph{age} (that is, the class of all finitely generated substructures) of a homogeneous structure clearly has the amalgamation property and in turn this property is crucial for the existence of the \fra\ limit---a homogeneous structure with the given age.

As it happens, the amalgamation property does not always extend to infinite structures, see Section~\ref{SECeroigjwroigj} below.
We say that a countable structure $M$ is an \emph{amalgamation base} if every two embeddings of $M$ into countable structures (from a given class) can be completed to a commutative square. In the case $M$ being a \fra\ limits, there is an easier characterization of this property. Namely, 
 if $M$ is a \fra\ limit, being an amalgamation base is equivalent to saying that for every two embeddings $\map f M M$, $\map g M M$ there exist embeddings $\map {f'} M M$, $\map {g'} M M$ such that $f' \cmp f = g' \cmp g$. Indeed, amalgamation gives embeddings $f_1$, $g_1$ into some countable structure $W$ and by the universality of $M$, there is an embedding $\map e W M$ so we may take $f' = e \cmp f_1$, $g' = e \cmp g_1$.
Summarizing, if $\U$ is a countable homogeneous structure then the information whether it is an amalgamation base is contained in the monoid of its self-embeddings.

Below is a simplified version of our main result.

\begin{tw}\label{th:mainsimple}
	Assume $\U$ is a countable homogeneous structure in a countable first-order language, such that $\U$ is an amalgamation base and $\emb \U \ne \aut \U$. Then there exists a homogeneous structure of size $\aleph_1$ that has the same age as $\U$.
\end{tw}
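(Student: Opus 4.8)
The plan is to realise $\U$ as the base of a continuous chain $\seq{\U_\alpha,\, e_\alpha^\beta : \alpha\le\beta<\omega_1}$ of copies of $\U$ with embeddings $\map{e_\alpha^\beta}{\U_\alpha}{\U_\beta}$ as bonding maps and $\U_0=\U$, and then to take $\U_{\omega_1}:=\varinjlim_{\alpha<\omega_1}\U_\alpha$. Two features will come for free: every finitely generated substructure of $\U_{\omega_1}$ lies in some $\U_\alpha\iso\U$ and $\U=\U_0\subseteq\U_{\omega_1}$, so $\U_{\omega_1}$ has the same age as $\U$; and if every $e_\alpha^{\alpha+1}$ is non-surjective then the images of the $\U_\alpha$ strictly increase, so $\card{\U_{\omega_1}}=\aleph_1$. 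Fix a self-embedding $e\in\emb\U\setminus\aut\U$; it is not surjective (see~\ref{rmk:rigid}), and post-composing with a copy of $e$ at each successor step will keep the bonding maps non-surjective. The real content is to make the chain \emph{generic}:
\begin{quote}
$(\ast)$\enspace for every $\alpha<\omega_1$ and every embedding $\map f{\U_\alpha}Z$ into a copy $Z\iso\U$ there are $\beta>\alpha$ and an embedding $\map gZ{\U_\beta}$ with $g\cmp f=e_\alpha^\beta$.
\end{quote}
In the language of abstract \fra\ theory, $(\ast)$ says that $\seq{\U_\alpha,e_\alpha^\beta}$ is a \fra\ sequence of length $\omega_1$ in the one-object category whose morphism monoid is $\emb\U$, the amalgamation base hypothesis being exactly that category's amalgamation property.

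Since $\emb\U$ may have cardinality $2^{\aleph_0}$, one cannot list all instances of $(\ast)$ along an $\omega_1$-recursion; instead one runs the recursion alongside a continuous $\in$-chain $\seq{M_\alpha:\alpha<\omega_1}$ of countable elementary submodels of some large $(H(\theta),\in,<^*)$ with $\U,\emb\U,e$ and (a name for) the recursion among its parameters and $\seq{M_\xi:\xi\le\alpha}\in M_{\alpha+1}$, while a bookkeeping function assigns to each successor stage one \emph{task} --- an embedding $\map f{\U_\xi}Z$ with $\xi\le\alpha$ and $Z\iso\U$, coded in $M_\alpha$ --- so that every task coded in some $M_\alpha$ is eventually served (only countably many lie in each $M_\alpha$). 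To serve a task $\map f{\U_\xi}Z$ at stage $\alpha+1$: identifying $\U_\xi,\U_\alpha,Z$ with $\U$, apply the amalgamation base property to $f$ and $e_\xi^\alpha$ to get $f_0,g_0\in\emb\U$ with $f_0\cmp f=g_0\cmp e_\xi^\alpha$; then take a fresh copy $\U_{\alpha+1}\iso\U$ and set $e_\alpha^{\alpha+1}:=e\cmp g_0$ and $g:=e\cmp f_0$, so that $e_\alpha^{\alpha+1}$ is a non-surjective embedding of $\U_\alpha$ into $\U_{\alpha+1}$ and $g\cmp f=e_\alpha^{\alpha+1}\cmp e_\xi^\alpha=e_\xi^{\alpha+1}$. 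At a limit $\lambda<\omega_1$ put $\U_\lambda:=\varinjlim_{\alpha<\lambda}\U_\alpha$; as \fra\ limits are stable under unions of countable chains, $\U_\lambda\iso\U$ and the recursion continues. The resulting chain satisfies $(\ast)$ for every embedding $f$ coded in some $M_\alpha$.

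To see $\U_{\omega_1}$ is homogeneous, let $\map pAB$ be an isomorphism between finitely generated substructures; both lie in some $\U_\gamma$, and since $\U_\gamma\iso\U$ is homogeneous, $p$ extends to an automorphism $\hat p$ of $\U_\gamma$ --- fix the $<^*$-least one. We extend $\hat p$ to an automorphism of $\U_{\omega_1}$ by a transfinite zig-zag along the chain: construct strictly increasing $\seq{\gamma_\xi}$, $\seq{\delta_\xi}$ cofinal in $\omega_1$ with $\gamma_0=\delta_0=\gamma$, together with embeddings $\map{a_\xi}{\U_{\gamma_\xi}}{\U_{\delta_\xi}}$ and $\map{b_\xi}{\U_{\delta_\xi}}{\U_{\gamma_{\xi+1}}}$ such that $a_0=\hat p$, $b_\xi\cmp a_\xi=e_{\gamma_\xi}^{\gamma_{\xi+1}}$ and $a_{\xi+1}\cmp b_\xi=e_{\delta_\xi}^{\delta_{\xi+1}}$ --- starting from $b_0:=e_{\gamma_0}^{\gamma_0+1}\cmp\hat p^{-1}$ and then alternately applying $(\ast)$ to the last map constructed (to $b_\xi$ to produce $a_{\xi+1}$, to $a_\xi$ to produce $b_\xi$), with colimits at limit stages (the relevant squares commute, so this is legitimate). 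Because the ``returning'' composites are bonding maps, the family $\seq{a_\xi}$ induces a self-embedding $a$ of $\varinjlim_\xi\U_{\gamma_\xi}=\U_{\omega_1}$, the family $\seq{b_\xi}$ a self-embedding $b$, and $b\cmp a=a\cmp b=\id{\U_{\omega_1}}$; hence $a\in\aut{\U_{\omega_1}}$ extends $a_0\supseteq p$. Together with the first paragraph, $\U_{\omega_1}$ is then a homogeneous structure of size $\aleph_1$ with the age of $\U$.

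The delicate point --- which I expect to be the main obstacle --- is that $(\ast)$ was secured only for embeddings \emph{coded in some $M_\alpha$}, whereas $\hat p$ and the maps $b_\xi,a_{\xi+1}$ appear only after the whole chain has been built. This is resolved by a reflection argument. Since $\hat p$ is taken $<^*$-least and the zig-zag is computed recursively from $p$, the chain $\seq{\U_\alpha,e_\alpha^\beta}$ and $<^*$, each $b_\xi$ and $a_{\xi+1}$ lies in $M_\alpha$ whenever $\alpha$ belongs to the club $C:=\setof{\alpha<\omega_1}{M_\alpha\cap\omega_1=\alpha}$ and $\alpha>\xi$; as $C$ is unbounded, each $b_\xi$, $a_{\xi+1}$ is coded in some $M_\alpha$, hence was served during the construction, so the instance of $(\ast)$ needed at that step is genuinely available. (There is no loss in assuming $\gamma\in C$: any $p$ as above already lives inside $\U_{\gamma'}$ for some $\gamma'\in C$.) Apart from this bookkeeping, the load is carried by the amalgamation base property --- used in the successor step of the construction --- and by the stability of \fra\ limits under countable unions.
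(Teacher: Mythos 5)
Your proof is essentially correct, but it takes a genuinely different route from the paper's. You build the $\omega_1$-chain directly as a long Fraïssé sequence over the monoid $\emb\U$ (your condition $(\ast)$), confront the cardinality mismatch ($\emb\U$ may have size $2^{\aleph_0}$ while only $\aleph_1$ amalgamation tasks can be served) via an elementary-submodel bookkeeping, and then prove homogeneity of the limit by a transfinite back-and-forth whose legitimacy rests on a reflection argument showing that the zig-zag maps are definable from the finite partial isomorphism and hence land in club-many $M_\alpha$'s. The paper instead never builds a long generic sequence: it first closes off to a \emph{countable} sub-monoid $\cM_0\subseteq\emb\U$ containing a prescribed countable $G\leq\aut\U$ and still having the amalgamation property, runs an ordinary $\omega$-length Fraïssé sequence inside $\cM_0$ (whose colimit is $\U$ by Lemma~\ref{thm:limitU}), and extracts from its homogeneity a single non-trivial $G$-extensible self-embedding (Theorem~\ref{thm:main}); these are then stacked $\omega_1$ times into an \emph{extensible chain} carrying coherent extension operators $s_\alpha^\beta$ (Lemma~\ref{thm:limit_extension}, Proposition~\ref{thm:countably_extensible}), so that homogeneity of the union is immediate -- each finite partial isomorphism extends inside some $U_\alpha$ to a member of the countable witnessing family $G_\alpha$, which the $s_\alpha^\beta$ push up to an automorphism of the whole structure. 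What each approach buys: yours yields a more canonical, ``generic'' limit and makes the analogy with the countable Fraïssé construction explicit, at the price of the $(H(\theta),\in,<^*)$ machinery and the care needed to make the chain, the bookkeeping, and the zig-zag uniformly definable (this is the part you rightly flag as delicate, and it does go through provided all choices are taken $<^*$-least); the paper's version confines all combinatorial work to countable objects, needs no elementary submodels, and its extensible-chain formalism is reused for the converse direction in Section~\ref{SubSecteohwroghweoig}. Two trivial remarks: non-surjectivity of $e\in\emb\U\setminus\aut\U$ is immediate from the definition of embedding rather than from Remark~\ref{rmk:rigid}; and your use of the amalgamation base hypothesis in the form ``for $f,g\in\emb\U$ there are $f',g'\in\emb\U$ with $f'\cmp f=g'\cmp g$'' is exactly the reformulation recorded in the Introduction and Remark~\ref{rm:amalgamation}.
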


Note that in the case of a finite relational language, the ultraproduct construction gives us a required structure of size $2^{\aleph_0}$ (Corollary \ref{cor:uf}), and then we can take an elementary submodel to obtain a homogeneous structure of size $\aleph_1$. The point of Theorem \ref{th:mainsimple} is to deal with the countably infinite
languages, possibly with function symbols.

In the proof we are using abstract \fra\ theory (see e.g.~\cite{Kub40}), building countable sequences in the monoid $\emb \U$, obtaining new non-trivial self-embeddings of $\U$ that extend automorphisms.
The key property resembles \emph{naturality} (see~\cite{HoSh1, HoSh2} and Section~\ref{SECaReviwew} below), that is, uniform constructions of embeddings $\map e \U {F(\U)}$ such that every automorphism of $\U$ extends to $F(\U)$. Nonetheless, we do not require compositions to be preserved.

Let us mention that our method is fairly general and it does not use almost anything from model theory, except the fact that we can distinguish small structures from large ones and compute their cardinalities.

The paper is organized as follows. After Preliminaries, we review some known results. In Section~\ref{SECextensible} we present the main technical lemma proving our main result Theorem \ref{thm:main}. Section~\ref{SECeroigjwroigj} contains relevant examples. We finish with a discussion and open problems.

\section{Preliminaries}

Let $\delta$ be an ordinal. By a $\delta$-\emph{sequence} of structures we mean a function $\vec u$ with domain $\delta$, where $\vec u(\al)$ is a first-order structure denoted by $U_\al$ and for every $\al<\beta<\delta$ there is an embedding $\map{u_\al^\beta}{U_\al}{U_\beta}$. Furthermore, $u_\al^\al=\id{U_\al}$ and $u_\al^\gamma = u_\beta^\gamma \cmp u_\al^\beta$ whenever $\al<\beta<\gamma$. 

We mostly work with sequences of structures that are simply a chain, namely, $U_\al$ is a substructure of $U_\beta$ and $u_\al^\beta$ is the corresponding inclusion, whenever $\al<\beta$.

If $\delta>0$ is a limit ordinal, we can consider the \emph{colimit} of $\vec u$ which in the case of a chain is simply its union.
A sequence $\vec u$ with domain $\delta$ is \emph{continuous} if for every limit ordinal $\rho<\delta$, $U_\rho$ is the colimit of $\vec u \rest \rho$. If $\vec u$ is a chain, this simply means that $U_\rho = \bigcup \sett{U_\xi}{\xi < \rho}$ for every limit ordinal $\rho < \delta$.
For more details concerning functors and their limits/colimits we refer to~\cite{MacLane}. We are going to use only the very basic concepts of category theory.

Given a first-order structure $M$, we denote by $\Fin(M)$ its \emph{age}, namely, the class of all finitely generated structures embeddable into $M$. \fra\ Theorem states that a countably generated homogeneous structure is determined by its age. Furthermore, a class $\Ef$ of finitely generated structures is the age of a countable homogeneous structure if and only if it is hereditary, has the amalgamation property (see below), the joint embedding property, and has countably many isomorphism types. Such a class is called a \fra\  class and the countably generated homogeneous structure whose age is $\Ef$, is called the \fra\ limit of $\Ef$. Note that the last two conditions of being a \fra\ class are automatically fulfilled whenever the language is purely relational and finite, in which case the \fra\ limit is countable. The \fra\ limit of a \fra\ class $\Ef$ will typically be denoted by $\U$.

Recall that the \emph{amalgamation property} of a class $\Kay$ of structures means that for every two embeddings $\map fZX$, $\map gZY$ with $Z,X,Y \in \Kay$ there exist $W \in \Kay$ and embeddings $\map{f'}XW$, $\map {g'}YW$ such that $f' \cmp f = g' \cmp g$.

Given a class $\Ef$ of first-order structures, we denote by $\sig \Ef$ the class of all structures isomorphic to the unions of countable chains in $\Ef$. 
In the case that $\Ef$ is a \fra\ class, $\sig \Ef$ is just the class of all countably generated structures whose age is contained in $\Ef$.
We denote by $\Cl \Ef$ the class of all structures whose age is contained in $\Ef$.

In what follows, we shall always assume that the language of the structures under consideration is countable. In particular, countably generated structures are countable.

The monoid of self-embeddings of a structure $M$ will be denoted by $\emb{M}$.
A monoid will be called \emph{non-degenerate} if it is not a group, namely, it contains at least one non-invertible element. A homomorphism (typically, an embedding) will be called \emph{non-trivial} if it is not an isomorphism.

Recall that a structure $X$ is \emph{injective} if for every two structures $A \subs B$ in $\Fin(X)$, every embedding $\map e A X$ can be extended to an embedding $\map {\tilde{e}} B X$.
The limit $\U$ of a Fraïssé class $\Ef$ can also be characterized as being injective, countable and with age $\Ef$.
Because of this and the fact that injectivity is preserved by colimits of sequences, we obtain the following.

\begin{lm} \label{thm:limitU}
	Let $\delta$ be any ordinal of cofinality $\omega$.
	The colimit of any $\delta$-sequence $\vec u$ of structures, each of them being isomorphic to $\U$, is again isomorphic to $\U$.
\end{lm}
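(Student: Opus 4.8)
The plan is to reduce to the case $\delta=\omega$ and then invoke the characterization of $\U$ recalled just above the statement: a structure is isomorphic to $\U$ if and only if it is countable, injective, and has age $\Ef$. Since $\cf(\delta)=\omega$, fix a strictly increasing sequence $\al_0<\al_1<\cdots$ cofinal in $\delta$. Passing from $\vec u$ to the cofinal subsequence indexed by the $\al_n$ (with bonding embeddings $u_{\al_n}^{\al_m}$) does not change the colimit, the induced comparison map between the two colimits being an isomorphism. So it suffices to treat an $\omega$-sequence, each of whose terms $U_n$ is isomorphic to $\U$. Let $U$ denote its colimit and $\map{u_n}{U_n}{U}$ the canonical embeddings; then the substructures $u_n[U_n]$ form an increasing chain with union $U$, and in particular every finite subset of $U$ is contained in $u_n[U_n]$ for some $n$.

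Now I check the three required properties. Countability of $U$ is immediate, as $U$ is a countable union of countable structures in a countable language. For the age: any finitely generated substructure $A\subs U$ is generated by a finite set, which sits inside some $u_n[U_n]$; hence $A$ is isomorphic to a finitely generated substructure of $U_n$, so $A\in\Fin(U_n)=\Fin(\U)=\Ef$. Conversely $\Ef=\Fin(U_0)\subs\Fin(U)$ since $U_0$ embeds into $U$. Thus $\Fin(U)=\Ef$.

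Injectivity of $U$ is the instance of the remark, quoted before the statement, that injectivity is preserved by colimits of sequences; explicitly: given $A\subs B$ in $\Ef=\Fin(U)$ and an embedding $\map{e}{A}{U}$, the finitely generated set $e[A]$ lies in some $u_n[U_n]$, so $e=u_n\cmp e_n$ for an embedding $\map{e_n}{A}{U_n}$; since $U_n$ is isomorphic to the injective structure $\U$ and $A,B\in\Fin(U_n)$, the embedding $e_n$ extends to some $\map{\tilde e_n}{B}{U_n}$, and then $u_n\cmp\tilde e_n$ extends $e$ to an embedding of $B$ into $U$. Applying the characterization, $U$ is isomorphic to $\U$, which proves the lemma.

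I do not expect a genuine obstacle here; the two points that deserve a line of care are that the colimit of a sequence is unchanged under passage to a cofinal subsequence, and the repeatedly used fact that a finite subset of the colimit of a chain of embeddings already appears in (the image of) one of the terms — both being elementary properties of colimits of $\omega$-indexed sequences of embeddings.
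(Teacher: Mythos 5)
Your proof is correct and follows essentially the same route as the paper's: reduce to $\delta=\omega$ by passing to a cofinal subsequence, then verify countability, that the age is $\Ef$, and injectivity (using that finitely generated substructures of the colimit are captured by some $U_n$), and conclude by the characterization of $\U$ as the countable injective structure with age $\Ef$. No issues.
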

\begin{pf}
	Without loss of generality, $\delta = \omega$ as any cofinal subsequence of $\vec u$ has the same colimit as $\vec u$.
	Then clearly the colimit $U_\infty$ is countable.
	As $\map {u_0^\infty} \U U_\infty$, we have $\Fin(U_\infty) \sups \Ef$.
	Since for every embedding of a finitely generated structure $\map f A U_\infty$ there is an embedding $\map {f'} A U_n$ for some $n \in \omega$, we have $\Fin(U_\infty) \subs \Ef$.
	Moreover, for every $B \sups A$ in $\Ef$ there is an embedding $\map {f''} B U_n$ extending $f'$ since $U_n \iso \U$ is injective.
	Hence, $\tilde{f} = u^\infty_n \cmp f''$ extends $f$, and therefore $U_\infty$ is injective.
\end{pf}

\begin{prop} \label{thm:necessary}
	The following conditions are equivalent.
	\begin{enumerate}
		\item The monoid $\emb \U$ is non-degenerate, i.e., there is a non-trivial self-embedding $\map e \U \U$.
		\item There is an uncountable injective structure $X$ with age $\Ef$.
	\end{enumerate}
\end{prop}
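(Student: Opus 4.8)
The plan is to prove the two implications separately. For $(2)\Rightarrow(1)$, suppose $X$ is an uncountable injective structure with age $\Ef$. I would pick a countably generated elementary-in-the-age substructure: using a Löwenheim–Skolem-type closure argument, build an increasing $\omega_1$-chain $\seqof{X_\al}{\al<\omega_1}$ of countably generated substructures of $X$, continuous at limits, with $\bigcup_{\al<\omega_1} X_\al = X$ (possible since $X$ is uncountable but the language is countable, so every countable subset generates a countable substructure). Each $X_\al$ has age contained in $\Ef$, and a standard back-and-forth / injectivity argument shows each $X_\al$ can be taken to be injective with age exactly $\Ef$ — hence $X_\al \iso \U$ by the characterization of $\U$ recalled just before Lemma \ref{thm:limitU}. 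Fixing an isomorphism $U_\al \iso X_\al$ and an isomorphism $U_{\al+1} \iso X_{\al+1}$, the inclusion $X_\al \hookrightarrow X_{\al+1}$ transports to a self-embedding $\map e \U \U$; and this $e$ is non-trivial (not onto) because $X_\al \ne X_{\al+1}$, as the union of the whole chain is uncountable so the chain cannot stabilize. This gives a non-invertible element of $\emb\U$, so the monoid is non-degenerate.

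For $(1)\Rightarrow(2)$, assume there is a non-trivial $\map e \U \U$. I would build a continuous $\omega_1$-chain $\seqof{U_\al}{\al<\omega_1}$ of copies of $\U$ as follows: $U_0 = \U$; at successor stages, use $e$ to embed $U_\al \iso \U$ into a fresh copy $U_{\al+1} \iso \U$ so that the inclusion $U_\al \subs U_{\al+1}$ corresponds to $e$ under the identifications (so $U_{\al+1} \setminus U_\al \ne \emptyset$ since $e$ is not onto); at limit stages $\rho$, set $U_\rho = \bigcup_{\al<\rho} U_\al$, which is isomorphic to $\U$ by Lemma \ref{thm:limitU} since $\cf(\rho)=\omega$. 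Let $X = \bigcup_{\al<\omega_1} U_\al$. Then $X$ is uncountable, because at each successor step at least one new element is added and $\omega_1$ is uncountable (more precisely, the set of $\al$ with $U_{\al+1}\setminus U_\al \ne \emptyset$ is all of $\omega_1$, injecting $\omega_1$ into $X$). Its age is $\Ef$: every finitely generated substructure of $X$ lies in some $U_\al \iso \U$, so is in $\Ef$; conversely $\Ef = \Fin(U_0) \subs \Fin(X)$. Finally $X$ is injective: given $A \subs B$ in $\Fin(X) = \Ef$ and an embedding $\map h A X$, the finite structure $h[A]$ lies inside some $U_\al \iso \U$, and injectivity of $U_\al$ extends $h$ to an embedding $B \to U_\al \subs X$.

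The main obstacle I expect is the detail in $(2)\Rightarrow(1)$, namely producing countably generated \emph{injective} substructures $X_\al$ of $X$ whose age is all of $\Ef$: one must interleave, in the transfinite construction, not only closing under the structure operations but also, for each $A \subs B$ in $\Ef$ and each embedding $\map e A X_\al$, adjoining (finitely many) witnesses in $X$ for an extension of $e$ to $B$ — which exist since $X$ itself is injective — and likewise realizing all members of $\Ef$ so the age does not shrink; a bookkeeping argument over $\omega$-many requirements at each stage keeps each $X_\al$ countable. (This is exactly the standard elementary-submodel / chain-of-injective-substructures argument; I would state it as a brief lemma or cite the analogous fact for \fra\ limits.) The rest, including the verification that the new embedding $e$ obtained in $(2)\Rightarrow(1)$ is genuinely non-surjective, is routine once the chain does not stabilize.
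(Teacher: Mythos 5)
Your proof is correct, and the forward direction $(1)\Rightarrow(2)$ is essentially identical to the paper's: iterate $e$ along a continuous $\omega_1$-chain of copies of $\U$, invoke Lemma~\ref{thm:limitU} at limits, and check cardinality, age, and injectivity of the union. For $(2)\Rightarrow(1)$, however, you take a genuinely heavier route than the paper. You filter $X$ by an $\omega_1$-chain of countably generated substructures and insist that each be injective with age exactly $\Ef$ (hence isomorphic to $\U$), which — as you yourself flag — requires a nontrivial closing-off/bookkeeping lemma to realize all of $\Ef$ and all extension requirements inside each $X_\al$. That argument does go through (it is in spirit what the paper does later, in Lemma~\ref{thm:closing_off} and Proposition~\ref{thm:chain_necessary}, in the homogeneous setting), but the paper's proof of this implication is much shorter and avoids the filtration entirely: write $\U = \bigcup_n A_n$ with $A_n$ finitely generated, use injectivity of $X$ to extend embeddings $A_n \to X$ step by step to an embedding $\map f \U X$, pick any countable substructure $Y$ with $\img f \U \subsetneq Y \subs X$ (possible since $X$ is uncountable), and embed $Y$ back into $\U$ by universality of $\U$ over $\sig\Ef$; the composite $\U \to Y \to \U$ has image properly contained in the image of $Y$, hence is non-surjective. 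So your decomposition buys a stronger conclusion (an $\omega_1$-filtration of $X$ by copies of $\U$) at the cost of an extra lemma, while the paper's argument extracts the single non-trivial self-embedding directly with no machinery beyond injectivity and universality.
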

\begin{pf}
	If $\map e \U \U$ is a non-trivial embedding, with the use of Lemma~\ref{thm:limitU} we can build a continuous $\omega_1$-sequence $\vec{u}$ such that $U_\al = \U$ and $u^{\al + 1}_\al = e$ for every $\al \in \omega_1$. Then consider its colimit $U_{\omega_1}$.
	Clearly $U_{\omega_1}$ is of cardinality $\aleph_1$.
	As in the proof of Lemma~\ref{thm:limitU} we argue that $U_{\omega_1}$ is injective and has age $\Ef$.
	
	Conversely, suppose $X$ is an uncountable injective structure with age $\Ef$.
	We can write $\U$ as an increasing union of finitely generated substructures $\bigcup_{n \in \omega} A_n$.
	There is an embedding $\map {f_0} {A_0} \U$.
	Since $X$ is injective, we can find a sequence of embeddings $\map {f_n} {A_n} X$, $n \in \omega$, each extending the previous one, and so $f = \bigcup_{n \in \omega} f_n$ embeds $\U$ into $X$.
	As $X$ is uncountable, there is a countable substructure $Y$ such that $\img f \U \subsetneq Y \subs X$.
	Since $\U$ is universal, $Y$ can be embedded into $\U$, and we obtain a non-trivial embedding $\map e \U \U$.
\end{pf}

\begin{uwgi} \label{rmk:rigid}
	Note that if $\U$ is rigid, i.e. $\aut \U$ is trivial, then $\emb \U$ is trivial as well.
	If there was a non-trivial embedding $\map e \U \U$, then there would be an automorphism of $\U$ mapping a substructure $A$ generated by a point $x \in \U \setminus \rng(e)$ so its isomorphic copy in $e[\U]$.
	Hence, $\emb \U = \aut \U = \set{\id{\U}}$.
\end{uwgi}

\section{A review of known results}\label{SECaReviwew}

As we have already mentioned, there are several well-known methods of constructing uncountable homogeneous structures, sometimes of arbitrary large size, sometimes possessing much stronger homogeneity, namely, with respect to infinitely generated substructures.

\subsection{Ultraproducts}\label{SSecERTrt}

As one may expect, ultraproducts often preserve homogeneity:

\begin{prop}\label{PROPsdohsdoiag} Let $\kappa\ge \aleph_0$ be a cardinal and suppose that 
 $\langle M_\al:\,\al<\kappa\rangle$ is a sequence of homogeneous structures in a fixed finite relational language. Then for every ultrafilter $p$ on $\kappa$, the ultraproduct $(\prod_{\al<\kappa}M_\al) \by p$ is homogeneous.
\end{prop}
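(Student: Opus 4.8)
The plan is to prove homogeneity of $N := (\prod_{\al<\kappa}M_\al)\by p$ by lifting a given isomorphism of finite substructures, coordinate by coordinate, to isomorphisms of the factors and amalgamating them with an ultraproduct; {\L}o\'s's theorem and the remark that an ultraproduct of isomorphisms is an isomorphism are the only tools needed, so no saturation or cardinal arithmetic enters. Since the language is finite and relational, a finitely generated substructure of $N$ is simply a finite subset, and homogeneity of $N$ amounts to: every isomorphism $\map f A B$ between finite substructures $A,B\subs N$ extends to an automorphism of $N$. So I would fix such an $f$, list $A=\set{[a^1],\dots,[a^n]}$, $B=\set{[b^1],\dots,[b^n]}$ with $f([a^i])=[b^i]$ and $a^i,b^i\in\prod_{\al<\kappa}M_\al$, and set $\bar a(\al)=(a^1(\al),\dots,a^n(\al))$ and $\bar b(\al)=(b^1(\al),\dots,b^n(\al))$ for $\al<\kappa$.

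The first step is to isolate a $p$-large set of coordinates on which the coordinatewise assignment $a^i(\al)\mapsto b^i(\al)$ is an isomorphism of substructures of $M_\al$. Because $f$ is a well-defined bijection, $[a^i]=[a^j]$ iff $[b^i]=[b^j]$; because $f$ preserves and reflects each of the finitely many relations of the language, $N\models R([a^{i_1}],\dots,[a^{i_k}])$ iff $N\models R([b^{i_1}],\dots,[b^{i_k}])$ for every relation symbol $R$ and every tuple of indices from $\set{1,\dots,n}$. Applying {\L}o\'s's theorem to each of these finitely many biconditionals and intersecting the resulting members of $p$, I would obtain a single $X\in p$ such that for all $\al\in X$ the map $a^i(\al)\mapsto b^i(\al)$ is well defined, injective, and preserves and reflects all relations of $M_\al$ — i.e., an isomorphism between the (finite) substructures of $M_\al$ generated by $\bar a(\al)$ and by $\bar b(\al)$. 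This is the only place where finiteness of the language is used: in a countable relational language one would be intersecting countably many members of $p$, which need not remain in $p$.

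Next I would lift and amalgamate. For $\al\in X$, homogeneity of $M_\al$ furnishes an automorphism $\map{\sig_\al}{M_\al}{M_\al}$ with $\sig_\al(a^i(\al))=b^i(\al)$ for $i=1,\dots,n$; for $\al\notin X$ put $\sig_\al=\id{M_\al}$. The induced map $\sig:=(\prod_{\al<\kappa}\sig_\al)\by p$, given by $\sig([c])=[(\sig_\al(c(\al)))_{\al<\kappa}]$, is well defined and is an automorphism of $N$: it is a bijection with inverse $(\prod_{\al<\kappa}\sig_\al^{-1})\by p$, and it preserves every relation by {\L}o\'s's theorem applied in each coordinate. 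Finally, for each $i$ the sequences $(\sig_\al(a^i(\al)))_{\al<\kappa}$ and $b^i$ agree on $X\in p$, hence $\sig([a^i])=[b^i]=f([a^i])$, so $\sig$ extends $f$; this proves that $N$ is homogeneous.

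The argument is short and otherwise routine; the step I expect to require the most care is the middle one — checking that finitely many applications of {\L}o\'s's theorem genuinely combine into one $p$-large set on which the coordinatewise map is simultaneously well defined, injective, and relation-preserving in both directions — together with the standard but worth-stating fact that the coordinatewise product of isomorphisms descends to an isomorphism of the ultraproducts, which is exactly what makes $\sig$ an automorphism of $N$ for an arbitrary ultrafilter $p$.
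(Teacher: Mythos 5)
Your proposal is correct and follows essentially the same route as the paper: restrict to a $p$-large set of coordinates where the coordinatewise map between representatives is an isomorphism of finite substructures (this is where finiteness of the relational language is used), extend in each factor by homogeneity, and take the product of the resulting automorphisms. You merely spell out, via \L{}o\'s's theorem, the step the paper compresses into ``without loss of generality, discarding a set of coordinates outside of $p$''.
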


\begin{pf}
	Let $\M = (\prod_{\al<\kappa}M_\al) \by p$. Given $x \in \prod_{\al<\kappa}M_\al$, we denote by $x \by p$ its equivalence class in $\M$. Given $S \subs \prod_{\al<\kappa}M_\al$, we write $S \by p = \setof{x \by p}{x \in S}$.
	Let $\map g {\tilde A}{\tilde B}$ be a finite isomorphism inside $\M$. We select sets of representatives $A$, $B$, so that $\tilde A = A \by p$, $\tilde B = B \by p$ and $(A \cap B) \by p = \tilde A \cap \tilde B$.
	Since the language is finite and relational,
	without loss of generality (discarding a set of coordinates outside of $p$) we may assume that $A(\al) := \setof{a(\al)}{a \in A}$ is canonically isomorphic to $A$. The same for $B$.
	In particular, for each $\al<\kappa$ the map $\map{g_\al}{A(\al)}{B(\al)}$, defined by $g_\al(a(\al)) = g(a)(\al)$ is a finite isomorphism.
	By the homogeneity of $M_\al$, there exists $h_\al \in \author{M_\al}$ extending $g_\al$. Finally, the product $h = \prod_{\al<\kappa}h_\al$ induces an automorphism of $\M$ extending $g$.
\end{pf}

\begin{wn}\label{cor:uf}
	Let $\Ef$ be a relational \fra\ class in a finite language and let $\U$ be its \fra\ limit.
	If $\U$ is infinite then for every non-principal ultrafilter $p$ on $\omega$, the ultrapower $\U^\omega \by p$ is homogeneous, of cardinality continuum, and has age $\Ef$.
\end{wn}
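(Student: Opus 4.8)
The plan is to derive Corollary~\ref{cor:uf} directly from Proposition~\ref{PROPsdohsdoiag} together with the standard model-theoretic facts about ultrapowers. First I would apply Proposition~\ref{PROPsdohsdoiag} with $\kappa = \aleph_0$, all the structures $M_\al$ equal to $\U$, and $p$ a non-principal ultrafilter on $\omega$; since $\U$ is homogeneous by assumption and the language is finite relational, this immediately yields that $\M := \U^\omega \by p$ is homogeneous.

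Next I would handle the cardinality claim. Since $\U$ is infinite and the language is countable (in fact finite), $\U$ is countably infinite, so $\U^\omega$ has cardinality $\cont$ and hence $\card{\M} \le \cont$. For the lower bound, the classical fact that a non-principal ultrapower of an infinite structure over $\omega$ is $\aleph_1$-saturated — or, more elementarily, the observation that one can embed $\Cantor$ into $\U^\omega \by p$ by a branching argument using independent functions — gives $\card{\M} \ge \cont$. I would cite this as a well-known property of ultrapowers (e.g.~\cite{Hodges}) rather than prove it in detail.

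Finally, for the age: by {\L}o\'s's theorem, $\M$ is elementarily equivalent to $\U$, and since the language is finite relational, the finitely generated (= finite) substructures of $\M$ are exactly the finite structures whose isomorphism type is "realized" by a first-order formula true in $\U$; as $\Ef$ is closed under substructures and $\U$ has age $\Ef$, one checks that a finite structure embeds into $\M$ if and only if it embeds into $\U$, i.e.\ $\Fin(\M) = \Ef$. More concretely: the diagonal embedding $\U \hookrightarrow \M$ shows $\Ef \subs \Fin(\M)$, and conversely any finite substructure of $\M$ is described by finitely many relations on finitely many elements, a configuration expressible by an existential sentence, which by {\L}o\'s's theorem must already hold in some factor $\U$, so it lies in $\Fin(\U) = \Ef$.

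The only mildly delicate point is the cardinality lower bound $\card{\M} \ge \cont$, which genuinely uses non-principality of $p$ and the infiniteness of $\U$; everything else is a routine unwinding of {\L}o\'s's theorem and Proposition~\ref{PROPsdohsdoiag}. I would therefore keep the proof short, invoking the standard ultrapower facts by reference.
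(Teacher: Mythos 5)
Your proof is correct and follows exactly the route the paper intends: Corollary~\ref{cor:uf} is stated there without proof, as an immediate consequence of Proposition~\ref{PROPsdohsdoiag} combined with the standard facts that a non-principal ultrapower of a countably infinite structure over $\omega$ has cardinality continuum and that, in a finite relational language, {\L}o\'s's theorem (or a direct coordinatewise argument) identifies the age of the ultrapower with that of $\U$. The details you supply for homogeneity, cardinality, and the age computation are all accurate and are precisely the intended unwinding.
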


Proposition~\ref{PROPsdohsdoiag} actually shows that there are arbitrarily large homogeneous structures with age $\Ef$ and of any given infinite cardinality (by taking 
a large enough ultrapower and a standard closing-off argument, in order to make it smaller---see Section~\ref{SubSecteohwroghweoig} for more details).
Note that if the language is infinite or just not relational, ultrapowers of a \fra\ limit may have bigger ages than  $\U$. For example, any non-trivial ultrapower of Hall's group (the \fra\ limit of finite groups) contains elements of infinite order, therefore is not locally finite.
On the other hand, Keisler~\cite{Keisler64} proved that arbitrary ultraproducts are $\aleph_0$-saturated. Note that a countably infinite $\aleph_0$-saturated structure is the \fra\ limit of its age.

\begin{uwgi}
Our original approach to construct an uncountable homogeneous structure of the prescribed age was to use \emph{simplified $(\omega, 1)$-morasses} of Velleman~\cite{Velleman84}, which, unlike ultrapower, give control over the age, and can yield an injective structure, but we were not able to prove homogeneity.

In more detail, a simplified $(\omega, 1)$-morass can be specified by the following data. First, there is an increasing sequence $\set{n_i}_{i \in \omega}$ of natural numbers with two sequences of one-to-one maps $\vec{u} = \set{u_i\maps n_i \to n_{i + 1}}$ and $\vec{v} = \set{v_i\maps n_i \to n_{i + 1}}$ that `split' every $n_i$, meaning that $n_i$ can be written as the disjoint union of subsets $A < B$, with $B \neq \emptyset$, and $n_{i + 1}$ can be written as the disjoint union of $A' < B'_0 < B'_1$, and $u_i$ is the increasing bijection $n_i = A \cup B \to A' \cup B'_0 \subs n_{i + 1}$ (which is the identity), while $v_i$ is the increasing bijection $n_i = A \cup B \to A' \cup B'_1 \subs n_{i + 1}$.

Second, there are families $\Gee_i$ of one-to-one maps $n_i \to \omega_1$, $i \in \omega$, satisfying certain conditions that will allow us to turn $\omega_1$ into a structure once we turn every $n_i$ into a structure and $\vec{u}, \vec{v}$ into sequences of embeddings.
Namely, for every $g \in \Gee_i$ we copy the structure from $n_i$ to $g[n_i]$ via $g$, and this will correctly define a structure $\Wu$ on $\omega_1$.
Note that given a structure on $n_i$, to specify a structure on $n_{i + 1}$ making $u_i, v_i$ embeddings means to provide a strong amalgamation of the given structures $A' \cup B'_0$ and $A' \cup B'_1$ over $A'$.
If we do it in a way that $\vec{u}$ becomes a Fraïssé sequence (cf.~\cite[Section 3]{Kub40}) in $\Ef$, then the structure $\Wu$ on $\omega_1$ will be injective with $\Fin(\Wu) = \Fin(\U)$.
\end{uwgi}

\subsection{Natural constructions}

Let $\Kay$ be a class of structures in a fixed first-order language. According to Hodges and Shelah (\cite{HoSh1, HoSh2}) a \emph{natural construction} is a map $F$ assigning to each $A \in \Kay$ a bigger structure $F(A) \in \Kay$, possibly in an expanded language, such that every automorphism $h$ of $A$ extends to an automorphism $F(h)$ of $F(A)$ and the composition is preserved.
The papers~\cite{HoSh1, HoSh2} studied relations between naturality and definability.
For example, free constructions (left adjoints to a forgetful functor) are natural (even much more, $F$ is a functor on all homomorphisms), while algebraic closures of fields are not.

Suppose that $\map F \Kay \Kay$ is a natural construction, with no expansion of the language. 
Then we have a canonical inclusion $A \subs F(A)$ up to isomorphism for every $A \in \Kay$. To simplify this explanation, let us assume that $A\subseteq 
F(A)$ literally. Hence for any fixed $A \in \Kay$, we can iterate $F$ to obtain a chain
$$A \subs F(A) \subs F^2(A) \subs \cdots.$$
Let $F^\omega(A) := \bigcup_{\ntr} F^n(A)$.
Naturality implies that for each $n$, every automorphism of $F^n(A)$ extends to an automorphism of $F^\omega(A)$.

Now suppose that $\Kay$ is stable under unions of $\omega$-chains. Then $F^\omega(A) \in \Kay$ and we can iterate $F$ in a transfinite way, obtaining an $\omega_1$-chain $\sett{F^\al(A)}{\al<\omega_1}$, taking unions at limit steps. Now, if $\Kay$ consists of countable structures and each canonical inclusion $X \subs F(X)$ is non-trivial (not the identity) then the structure $F^{\omega_1}(A) := \bigcup_{\al<\omega_1}F^\al(A)$ is uncountable.
Furthermore, again using naturality, we see that for each $\al<\omega_1$, every automorphism of $F^{\al}(A)$ extends to an automorphism of $F^{\omega_1}(A)$. Since every isomorphism between finitely generated substructures of $F^{\omega_1}(A)$ is ``captured'' by some $F^\al(A)$, we could conclude that $F^{\omega_1}(A)$ is homogeneous, as long as $F^\al(A)$ is homogeneous for an unbounded set of $\al<\omega_1$.

Summarizing, if $A$ is a \fra\ limit and $F(A)$ is isomorphic to $A$ then the same holds for every countable $\al$, and therefore we have a method of constructing uncountable homogeneous structures. Concrete cases are described below.

In fact, full naturality is not needed, in the construction above we only need to know that automorphisms extend. This will be further explored in Section~\ref{SECextensible}.

\subsubsection{Rigid moieties}

Let $\Ef$ be a \fra\ class and let $\U$ be its \fra\ limit. A \emph{rigid moiety} is a pair $\pair A B$ where $A$ is a substructure of $B$, both $A$ and $B \setminus A$ are countable infinite with age contained in $\Ef$, and every automorphism of $A$ extends uniquely to an automorphism of $B$. Obviously, this becomes interesting when $B \iso \U$ and even more when $A \iso \U$.
This concept was formally introduced by Bilge and Jaligot~\cite{BilJal}, although the first existence result was due to Henson~\cite{Henson}, concerning graphs.
Jaligot~\cite{Jaligot1} showed that rigid moieties exist in the class of tournaments, and later
Bilge and Jaligot~\cite{BilJal} showed the same for the class of $K_n$-free graphs for every $n>2$. In particular, there exists a rigid moiety $\pair AB$, where both $A$, $B$ are isomorphic to Henson's universal homogeneous $K_n$-free graph.

Given a rigid moiety $\pair AB$ with both $A$ and $B$ isomorphic to the \fra\ limit $\U$, one immediately gets a natural construction on the class $\Kay$ consisting of all structures isomorphic to $\U$.

Apart from the case of Henson's universal homogeneous $K_n$-free graph described above, it is not known which \fra\ classes admit rigid moieties with both components isomorphic to the \fra\ limit.

\subsubsection{Kat\v etov functors}

We briefly recall the main concept of~\cite{KubMas} (going back to Kat\v etov~\cite{Kat}) and its use for proving the existence of arbitrarily large homogeneous structures.

Fix a \fra\ class $\Ef$. A \emph{Kat\v etov functor} on $\Ef$ is a functor $\map K \Ef {\sig \Ef}$, together with a collection of embeddings $\sett{\map {\eta_X}X{K(X)}}{X \in \Ef}$, satisfying the following conditions.
\begin{enumerate}[itemsep=0pt]
	\item \label{Kzero} Given an embedding $\map e XY$ with $X, Y \in \Ef$, the diagram
	$$\begin{tikzcd}
		X \ar[d, hook, "e"'] \ar[r, hook, "\eta_X"] & K(X) \ar[d, hook, "K(e)"] \\
		Y \ar[r, hook, "\eta_Y"'] & K(Y)
	\end{tikzcd}$$
	commutes.
	
	\item Given $X \in \Ef$ and a one-point extension $Y \sups X$, there exists an embedding $\map f Y{K(X)}$ such that $f \rest X = \eta_X$.
\end{enumerate}
In other words, the embedding $\eta_X$ of $X$ into $K(X)$ realizes all one-point extensions of $X \in \Ef$. We have to make precise the meaning of $K$ being a functor. Namely, this is a covariant functor on $\Ef$ with all embeddings into $\sig \Ef$ with all embeddings. A more general variant was considered in~\cite{KubMas}, where $\Ef$ and $\sig \Ef$ were treated as categories with all homomorphisms, however then $K$ was supposed to preserve embeddings.
The collection $\sett{\eta_X}{X \in \Ef}$ is nothing but a natural transformation from the inclusion $\Ef \subs \sig \Ef$ to the functor $K$. In the case that $\eta_X$ is the inclusion of $X$ into $K(X)$, we obviously have a natural construction in the sense of Hodges and Shelah, mentioned above.

It has been proved in~\cite{KubMas} that every Kat\v etov functor canonically extends to $\sig \Ef$, so that condition \ref{Kzero} holds for arbitrary embeddings between structures from $\sig \Ef$.
Then for any $X \in \sig\Ef$, we have a sequence of embeddings
$$\begin{tikzcd}
	X \ar[r, hook, "\eta_X"] & K(X) \ar[r, hook, "\eta_{K(X)}"] & K^2(X) \ar[r, hook] & \cdots
\end{tikzcd}$$
whose colimit $K^\omega(X)$ is isomorphic to the \fra\ limit of $\Ef$.
Let $\map{\eta^\omega_X}{X}{K^\omega(X)}$ be the co-limiting embedding, that is, part of the co-limiting co-cone over the above sequence.
Then $K^\omega$ together with $\eta^\omega$ satisfies condition (1) above and a variant of condition (2), making it a Kat\v etov functor whose values on all objects are isomorphic to the \fra\ limit $\U$ of $\Ef$.

Hence if $\map{\eta^\omega_\U}{\U}{K^\omega(\U)}$ is non-trivial, then we have a way of constructing uncountable homogeneous structures with age $\Ef$.
Actually, a Kat\v etov functor extends uniquely to the class $\ovr \Ef$ of all structures with age in $\Ef$. It may happen that $\ovr \Ef = \sig \Ef$ (see the example of locally cyclic torsion-free groups in Section~\ref{SECeroigjwroigj}); otherwise there is a chance of building arbitrarily large structures, as long as the corresponding natural transformation is a non-trivial embedding, so that the transfinite chain still grows.

\section{Extensibility}\label{SECextensible}

We now extract the key property of self-embeddings that allows for constructions of uncountable homogeneous structures. Later we introduce a parametrized version, relevant for our main result.

An embedding $\map e XY$ is said to be \emph{extensible} if for every automorphism $h$ of $X$ there exists an automorphism $\tilde{h}$ of $Y$ such that $\tilde{h} \cmp e = e \cmp h$,
	$$\begin{tikzcd}
		X \ar[d, "h"'] \ar[r, hook, "e"] & Y \ar[d, "\tilde{h}"] \\
		X \ar[r, hook, "e"'] & Y
	\end{tikzcd}$$
as shown in the diagram.

Of course, every identity is extensible (trivially) and the composition of two extensible embeddings is extensible. Furthermore, if $\U$ is a homogeneous structure and $F \subs \U$ is finitely generated, then the inclusion $F \subs \U$ is extensible.
Clearly, every natural construction in the sense of Hodges and Shelah~\cite{HoSh1, HoSh2} gives extensible embeddings. In fact,  in~\cite{HoSh2} one can find a weakening, called \emph{weak naturality}, asserting that automorphisms extend to automorphisms, without requiring that the composition be preserved. This is exactly what we need, and we define it locally, for a pair of concrete structures, see Definition \ref{def:parext}.

A non-trivial (i.e. not surjective) extensible self-embedding of a \fra\ limit leads to an uncountable homogeneous structure, namely:

\begin{prop}\label{prop:limit}
	Suppose that $\U$ is a \fra\ limit and $\map e \U\U$ is a non-trivial extensible embedding. Furthermore, suppose that we are given a
	sequence $\vec u=\seq{U_\alpha: \alpha<\omega_1}$ such that:
	\begin{enumerate}
\item Each $U_\alpha$ is isomorphic to $\U$, through an isomorphism $u_\alpha\maps \U \to U_\alpha$
\item For any $\alpha$, there is an embedding $u_{\al}^{\al+1}\maps U_\alpha \to U_{\alpha+1}$ such that the following diagram commutes:

$$\begin{tikzcd}
		\U \ar[d, "u_\alpha"'] \ar[r, hook, "e"] & \U \ar[d, "u_{\alpha+1}"] \\
		U_\alpha \ar[r, hook, "u_{\al}^{\al+1}"'] & U_{\alpha+1}
	\end{tikzcd}$$
\item For any limit $\delta\in (0,\omega_1)$, we have that $U_\delta$ is the colimit of the system of embeddings
$\seq{U_\alpha, u_{\al}^{\al+1}: \alpha<\delta}$. In particular, $U_\delta$ is unique up to isomorphism and isomorphic to $\U$. (The latter follows
from \fra\ Theorem.)
\end{enumerate}
Then the colimit of $\vec u$ is an uncountable homogeneous structure.
\end{prop}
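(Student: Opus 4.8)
The plan is to verify two things about the colimit of $\vec u$, which I denote by $U_{\omega_1}$, with limiting cocone $\map{u_\alpha^{\omega_1}}{U_\alpha}{U_{\omega_1}}$: that it is uncountable, and that it is homogeneous. Uncountability follows at once from non-triviality of $e$. Indeed, each $U_\alpha\iso\U$ is countable, and the commuting square in hypothesis (2) has $u_\alpha$ and $u_{\alpha+1}$ isomorphisms, so each connecting embedding $u_\alpha^{\alpha+1}$ fails to be surjective. Passing to the equivalent chain picture (every such sequence is isomorphic to one in which the $u_\alpha^\beta$ are inclusions and the colimits at limits are honest unions) we get $U_{\omega_1}=\bigcup_{\alpha<\omega_1}U_\alpha$ with $U_\alpha\subsetneq U_{\alpha+1}$ for every $\alpha<\omega_1$; a strictly increasing union of $\aleph_1$ countable sets has cardinality $\aleph_1$. (The argument of Lemma~\ref{thm:limitU} also gives $\Fin(U_{\omega_1})=\Fin(\U)$, although this is not needed for the statement.)

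For homogeneity, start with an isomorphism $\map p{\tilde A}{\tilde B}$ between finitely generated substructures of $U_{\omega_1}$. Since $\tilde A$ and $\tilde B$ are finitely generated, there is $\alpha_0<\omega_1$ with $\tilde A,\tilde B\subs\rng(u_{\alpha_0}^{\omega_1})$; transporting $p$ back along $u_{\alpha_0}^{\omega_1}$ yields an isomorphism between finitely generated substructures of $U_{\alpha_0}\iso\U$, which by homogeneity of $\U$ extends to some $g_{\alpha_0}\in\aut{U_{\alpha_0}}$. I then build, by transfinite recursion on $\alpha\in[\alpha_0,\omega_1)$, automorphisms $g_\alpha\in\aut{U_\alpha}$ subject to the coherence requirement $g_\beta\cmp u_\alpha^\beta=u_\alpha^\beta\cmp g_\alpha$ for all $\alpha_0\le\alpha<\beta$.

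The recursion has two steps. At a successor $\beta+1$: transport $g_\beta$ through $u_\beta$ to $h:=u_\beta^{-1}\cmp g_\beta\cmp u_\beta\in\aut\U$, use extensibility of $e$ to get $\tilde h\in\aut\U$ with $\tilde h\cmp e=e\cmp h$, and put $g_{\beta+1}:=u_{\beta+1}\cmp\tilde h\cmp u_{\beta+1}^{-1}\in\aut{U_{\beta+1}}$; a direct diagram chase using the square in (2) (in the form $u_\beta^{\beta+1}\cmp u_\beta=u_{\beta+1}\cmp e$) together with $\tilde h\cmp e=e\cmp h$ shows $g_{\beta+1}\cmp u_\beta^{\beta+1}=u_\beta^{\beta+1}\cmp g_\beta$, so coherence is preserved. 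At a limit $\delta$: by the coherence maintained so far, the maps $u_\alpha^\delta\cmp g_\alpha\colon U_\alpha\to U_\delta$ ($\alpha<\delta$) form a cocone over $\seq{U_\alpha,u_\alpha^{\alpha+1}:\alpha<\delta}$, so by hypothesis (3) (that $U_\delta$ is the colimit of this system) there is a unique $\map{g_\delta}{U_\delta}{U_\delta}$ with $g_\delta\cmp u_\alpha^\delta=u_\alpha^\delta\cmp g_\alpha$ for all $\alpha<\delta$; the map built the same way from the $g_\alpha^{-1}$ is a two-sided inverse of $g_\delta$ by uniqueness of factorizations through a colimit, so $g_\delta\in\aut{U_\delta}$ and coherence persists.

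Finally, $U_{\omega_1}$ is the colimit of $\vec u$, hence of its cofinal tail $\seq{U_\alpha:\alpha_0\le\alpha<\omega_1}$, so the now-coherent family $\set{u_\alpha^{\omega_1}\cmp g_\alpha:\alpha_0\le\alpha<\omega_1}$ is a cocone and induces, exactly as at the limit stages, an automorphism $g_{\omega_1}\in\aut{U_{\omega_1}}$ with $g_{\omega_1}\cmp u_\alpha^{\omega_1}=u_\alpha^{\omega_1}\cmp g_\alpha$ for $\alpha\ge\alpha_0$. In particular $g_{\omega_1}\cmp u_{\alpha_0}^{\omega_1}=u_{\alpha_0}^{\omega_1}\cmp g_{\alpha_0}$, and since $g_{\alpha_0}$ extends the transported copy of $p$, the automorphism $g_{\omega_1}$ extends $p$. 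I expect the only real obstacle to be organizational: threading the coherence condition through the recursion and gluing correctly at limits, the genuine inputs being the defining square (2) and the definition of extensibility at successor steps, and the colimit/continuity hypothesis (3) at limit steps; each individual step is a routine diagram chase or an appeal to a universal property.
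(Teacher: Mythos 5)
Your argument is correct: the successor-step diagram chase (using $u_\beta^{\beta+1}\cmp u_\beta=u_{\beta+1}\cmp e$ together with $\tilde h\cmp e=e\cmp h$), the cocone/uniqueness argument at limits, and the cardinality count are all sound. It is worth noting, though, that the paper does not prove Proposition~\ref{prop:limit} this way; it only sketches the claim and defers the details to Lemma~\ref{thm:limit_extension} and Proposition~\ref{thm:countably_extensible}, where the bookkeeping is organized differently. There, one fixes \emph{in advance}, at each stage $\alpha$, a countable family $G_\alpha\subs\aut{U_\alpha}$ witnessing homogeneity together with a coherent system of extension operators $s_\alpha^\beta\maps G_\alpha\to G_\beta$, and at the end any finite isomorphism is captured by some $U_\alpha$, extended inside $G_\alpha$, and pushed up by $s_\alpha^{\omega_1}$. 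You instead run a separate transfinite recursion for each finite partial isomorphism $p$, propagating a single automorphism up the chain. Under the hypothesis of this proposition (full extensibility of $e$, i.e.\ $\aut\U$-extensibility) your per-isomorphism recursion is legitimate and arguably more economical, since no uniform choice of $\tilde h$ is needed --- extensibility is simply invoked afresh at every successor step. What the paper's heavier machinery buys is that it survives the weakening to $G$-extensibility for countable $G$ only (Definition~\ref{def:parext} and Proposition~\ref{thm:countably_extensible}), which is the form actually needed for the main theorem: there one cannot extend an arbitrary automorphism arising mid-recursion, only those that were registered in the countable witness family fixed before the chain was built. So your proof establishes Proposition~\ref{prop:limit} as stated but would not generalize to the parametrized setting without adopting essentially the paper's coherent-family formulation.
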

 
The fact that the colimit $U^\ast$ of $\vec u$ exists and is unique up to isomorphism follows from the general categorical considerations by composing embeddings. It also has the same age as $\U$. 
The fact that it is homogeneous follows since every $U_\alpha$ is homogeneous, $e$ is extensible and the age of $U^\ast$ is the same as that of each $U_\alpha$. The fact that
$U^\ast$ is uncountable follows from the fact that $e$ is not surjective. We shall explain these details in the proof of Proposition~\ref{thm:countably_extensible}, which is
an extension of Proposition~\ref{prop:limit}. That lemma will also imply that a sequence $\vec u$ as above can be constructed by transfinite recursion.

Motivated by the above, we pose the following question:

\begin{question}
	Let $\U$ be a countable homogeneous structure (the \fra\ limit of its age). What conditions on the age of $\U$ guarantee that there exists a non-trivial extensible embedding $\map e \U \U$?
\end{question}

We now extend the definition of extensible embeddings to a parametrized version.

\begin{df}\label{def:parext}
Given a subset (not necessarily a subgroup) $G\subseteq \aut X$, we shall say that $\map e X Y$ is \emph{$G$-extensible} if for every $h \in G$ there is $\tilde{h} \in \aut Y$ such that $e \cmp h = \tilde{h} \cmp e$. 
\end{df}

So $e$ is extensible if it is $\aut X$-extensible. When applied to  \fra\ limits, 
the parametrized variant of extensibility can be used for constructing uncountable homogeneous structures as unions of extensible chains.

\begin{df}
	By an \emph{extensible chain} of length $\delta$ we mean a sequence $\set{U_\alpha, G_\alpha, s_\alpha^\beta}_{\alpha<\beta<\delta}$ such that
	\begin{enumerate}
		\item $\set{U_\alpha}_{\alpha < \delta}$ is a strictly increasing continuous chain of homogeneous infinite structures in $\Cl{\Ef}$, i.e. $U_\alpha \subsetneq U_{\alpha + 1}$ for every $\alpha < \delta$, and $U_\beta = \bigcup_{\alpha < \beta} U_\alpha$ for every limit $\beta < \delta$;
		\item every $G_\alpha$ is a family of automorphisms of $U_\alpha$ of cardinality $\leq \card{U_\alpha}$ witnessing its homogeneity, i.e. every isomorphism of finitely generated substructures of $U_\alpha$ can be extended to an automorphism from $G_\alpha$;
		\item for every $\alpha < \beta < \delta$ the map $s_\alpha^\beta\maps G_\alpha \to G_\beta$ witnesses that the inclusion $U_\alpha \subs U_\beta$ is $G_\alpha$-extensible, i.e. $h \subs s_\alpha^\beta(h)$ for every $h \in G_\alpha$;
		\item the family $\set{s_\alpha^\beta}_{\alpha < \beta < \delta}$ is coherent, i.e. $s^\gamma_\beta \cmp s^\beta_\alpha = s^\gamma_\alpha$ whenever $\alpha < \beta < \gamma < \delta$.
	\end{enumerate}
	An \emph{extensible $\U$-chain} is an extensible chain $\set{U_\alpha, G_\alpha, s_\alpha^\beta}_{\alpha<\beta<\delta}$ such that $U_\alpha \iso \U$ for every $\alpha < \delta$.
\end{df}

Recall that $\Fin(M)$ denotes the age of a first-order structure $M$.

\begin{lm}\label{thm:limit_extension}
	Every extensible chain $\set{U_\alpha, G_\alpha, s_\alpha^\beta}_{\alpha<\beta<\delta}$ of a limit length $\delta$ can be extended to an extensible chain of length $\delta + 1$.
	Moreover, the homogeneous limit structure $U_\delta$ satisfies $\card{U_\delta} = \card{\delta} + \sup_{\alpha<\delta} \card{U_\alpha}$ and $\Fin(U_\delta) = \bigcup_{\alpha < \delta} \Fin(U_\alpha)$.
\end{lm}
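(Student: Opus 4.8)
The statement has two parts: (a) extend an extensible chain of limit length $\delta$ to length $\delta+1$, and (b) verify the cardinality and age formulas for the new structure $U_\delta$. I would begin with part (a). Define $U_\delta := \bigcup_{\alpha < \delta} U_\alpha$, which is the colimit of the chain; this is a structure in $\Cl{\Ef}$ since being in $\Cl\Ef$ means having age inside $\Ef$, which passes to unions of chains. The continuity condition of the chain forces $U_\delta$ to be this union, so the only real work is to produce the family $G_\delta$ of automorphisms witnessing homogeneity, together with the coherent maps $s_\alpha^\delta\maps G_\alpha \to G_\delta$.

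The key point is that each $h \in G_\alpha$ extends coherently up the chain: by coherence of the $s_\alpha^\beta$, for a fixed $h \in G_\alpha$ the automorphisms $s_\alpha^\beta(h) \in G_\beta$ for $\alpha \le \beta < \delta$ form an increasing chain of partial maps (since $s_\beta^\gamma(s_\alpha^\beta(h)) = s_\alpha^\gamma(h)$ and each $s$ is an inclusion of maps), so their union $\widehat h := \bigcup_{\alpha \le \beta < \delta} s_\alpha^\beta(h)$ is a well-defined bijection of $U_\delta$ onto itself; it is an automorphism because any relation or function instance in $U_\delta$ already lives in some $U_\beta$ and is preserved by $s_\alpha^\beta(h)$. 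Define $G_\delta := \setof{\widehat h}{h \in G_\alpha,\ \alpha < \delta}$ and $s_\alpha^\delta(h) := \widehat h$. Coherence $s_\beta^\delta \cmp s_\alpha^\beta = s_\alpha^\delta$ is immediate from the construction. To see $G_\delta$ witnesses homogeneity: any isomorphism between finitely generated substructures $A, B \subs U_\delta$ has $A, B$ contained in some $U_\alpha$ (finitely many generators, and the chain is continuous at $\delta$, so each generator appears at some stage below $\delta$, and we take the sup — using $\cf(\delta)$ may exceed $\omega$ in general, but finitely many elements still lie below $\delta$, hence in a single $U_\alpha$); then extend it to some $h \in G_\alpha$ by clause (2) for $U_\alpha$, and $\widehat h \in G_\delta$ extends it. For the cardinality bound on $G_\delta$: $\card{G_\delta} \le \sum_{\alpha < \delta} \card{G_\alpha} \le \card\delta \cdot \sup_{\alpha<\delta}\card{U_\alpha} \le \card{\delta} + \sup_{\alpha<\delta}\card{U_\alpha} = \card{U_\delta}$, as required by clause (2) of the definition. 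I should also check that if all $U_\alpha \iso \U$ then $U_\delta \iso \U$ when $\cf(\delta) = \omega$ (Lemma~\ref{thm:limitU}); but the lemma as stated does not assert this in general, and indeed for uncountable cofinality $U_\delta$ is uncountable, so the "extensible $\U$-chain" case only applies when $\delta$ has countable cofinality — I would simply note this is not claimed here.

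For part (b), the age formula: clearly $\Fin(U_\alpha) \subs \Fin(U_\delta)$ for each $\alpha$ since $U_\alpha \subs U_\delta$, giving $\bigcup_{\alpha<\delta}\Fin(U_\alpha) \subs \Fin(U_\delta)$. Conversely, a finitely generated substructure of $U_\delta$ is generated by finitely many elements, each appearing in some $U_{\alpha_i}$; taking $\alpha$ above all the $\alpha_i$ (possible since $\delta$ is a limit), the substructure embeds into $U_\alpha$, so it lies in $\Fin(U_\alpha)$. For the cardinality formula, $\card{U_\delta} = \card{\bigcup_{\alpha<\delta}U_\alpha} \le \card\delta \cdot \sup_{\alpha<\delta}\card{U_\alpha} = \card\delta + \sup_{\alpha<\delta}\card{U_\alpha}$ (all $U_\alpha$ infinite), and the reverse inequality holds since $U_\delta$ contains each $U_\alpha$ and the chain is strictly increasing, so $\card{U_\delta} \ge \card\delta$ as well.

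\textbf{Main obstacle.} The construction is essentially routine once the bookkeeping is set up; the one place requiring care is the interaction between $\cf(\delta)$ and the "finitely generated" condition — one must make sure that a finitely generated substructure of $U_\delta$ genuinely sits inside a single $U_\alpha$, which works because only finitely many generators are involved even when $\cf(\delta) > \omega$, so their stages have a common upper bound below $\delta$. A secondary subtlety is confirming that $\widehat h$ really is surjective onto $U_\delta$ (not just injective): each $s_\alpha^\beta(h)$ is an automorphism of $U_\beta$, hence surjective onto $U_\beta$, and these cover $U_\delta$, so $\widehat h$ is onto. I do not expect any genuinely hard step.
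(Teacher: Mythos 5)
Your proposal is correct and follows essentially the same route as the paper's proof: take $U_\delta$ to be the union, define $s_\alpha^\delta(h)$ as the increasing union $\bigcup_{\beta\in(\alpha,\delta)}s_\alpha^\beta(h)$ (well-defined by coherence), set $G_\delta=\bigcup_{\alpha<\delta}s_\alpha^\delta[G_\alpha]$, and verify homogeneity, coherence, and the cardinality and age formulas exactly as you do. Your extra remarks on uncountable cofinality and on surjectivity of $\widehat h$ are correct and only make explicit points the paper leaves implicit.
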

\begin{pf}
	We put $U_\delta = \bigcup_{\alpha<\delta} U_\alpha$, and for every $\alpha < \delta$ and $h \in G_\alpha$ we put $s_\alpha^\delta(h) = \bigcup_{\beta\in(\alpha, \delta)} s_\alpha^\beta(h)$.
	By the coherence condition, $\set{s_\alpha^\beta}_{\beta\in(\alpha, \delta)}$ is an increasing chain, and $s_\alpha^\delta(h)$ is well-defined, as depicted in the following diagram.
	\[\begin{tikzcd}
		U_\al \ar[d, "h"'] \ar[r, "\subsetneq"] & U_{\al+1} \ar[d, "s_\al^{\al+1}(h)"] \ar[r, "\subsetneq"] & \cdots \ar[r, "\subsetneq"] & U_\delta \ar[d, dashed, "s_\al^\delta(h)"] \\
		U_\al \ar[r, "\subsetneq"'] & U_{\al+1} \ar[r, "\subsetneq"'] & \cdots \ar[r, "\subsetneq"'] & U_\delta 
	\end{tikzcd}\]
	It is also easy to see that $s_\alpha^\delta(h) \in \aut{U_\delta}$.
	Finally, we put $G_\delta = \bigcup_{\alpha < \delta} s^\delta_\alpha[G_\alpha]$.
	
	The limit structure is homogeneous and we have $\Fin(U_\delta) = \bigcup_{\alpha < \delta} \Fin(U_\alpha)$ since every finitely generated substructure of $U_\delta$ is already a substructure of $U_\alpha$ for some $\alpha < \delta$.
	Hence, an isomorphism of finitely generated substructures of $U_\delta$ is already inside of some $U_\alpha$, and so extends to an automorphism $h \in G_\alpha$, which then extends to $s^\delta_\alpha(h) \in G_\delta$.
	
	Obviously we have $\card{U_\delta} \geq \sup_{\alpha < \delta} \card{U_\alpha}$, and we have $\card{U_\delta} \geq \card{\delta}$ since every inclusion $U_\alpha \subs U_{\alpha+1}$ is proper.
	On the other hand, $\card{\bigcup_{\alpha<\delta} U_\alpha} \leq \card{\delta} \cdot \sup_{\alpha<\delta} \card{U_\alpha}$.
	Hence, $\card{U_\delta} = \card{\delta} + \sup_{\alpha<\delta} \card{U_\alpha}$.
	Similarly, we obtain $\card{G_\delta} \leq \card{\delta} + \sup_{\alpha<\delta} \card{G_\alpha} \leq \card{\delta} + \sup_{\alpha<\delta} \card{U_\alpha} = \card{U_\delta}$.
	
	Note that the family $\set{s_\alpha^\beta}_{\alpha<\beta<\delta+1}$ is still coherent since 
	\[\textstyle
		s^\delta_\beta \cmp s^\beta_\alpha = (\bigcup_{\gamma \in (\beta, \delta)} s^\gamma_\beta) \cmp s^\beta_\alpha = \bigcup_{\gamma \in (\beta, \delta)} (s^\gamma_\beta \cmp s^\beta_\alpha) = \bigcup_{\gamma \in (\beta, \delta)} s^\gamma_\alpha = s^\delta_\alpha.
	\]
	The remaining properties are easy to see.
\end{pf}

\begin{prop} \label{thm:countably_extensible}
	Suppose that $\U$ is a \fra\ limit of $\Ef$ such that for every countable $G \subseteq \aut \U$ there exists a non-trivial $G$-extensible embedding $\map e \U \U$.
	Then there exists an extensible $\U$-chain of length $\omega_1$, and consequently there exists an uncountable homogeneous structure with age $\Ef$.
\end{prop}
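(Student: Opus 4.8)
The plan is to build the extensible $\U$-chain $\set{U_\alpha, G_\alpha, s_\alpha^\beta}_{\alpha < \beta < \omega_1}$ by transfinite recursion, using Lemma~\ref{thm:limit_extension} to handle limit stages and the hypothesis (existence of a non-trivial $G$-extensible embedding for countable $G$) to handle successor stages; the final assertion then follows by applying Proposition~\ref{prop:limit} (or rather its proof, which Proposition~\ref{prop:limit} promised to supply here) to the colimit of the chain. So first I would fix $U_0 = \U$ and let $G_0$ be any countable family of automorphisms of $\U$ witnessing homogeneity; such a family exists because $\U$ is countable, hence has only countably many pairs of finitely generated substructures and countably many isomorphisms between them, so we may pick one extending automorphism for each.

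At a successor stage, suppose $U_\alpha \iso \U$ and $G_\alpha$ is a countable witnessing family. Transport $G_\alpha$ along the isomorphism $u_\alpha\maps \U \to U_\alpha$ to a countable subset $G \subseteq \aut \U$; by hypothesis there is a non-trivial $G$-extensible embedding $\map e \U \U$. Realize $e$ concretely as a strict inclusion: let $U_{\alpha+1}$ be the structure on (a copy of) the underlying set of $\U$ extending $U_\alpha$ so that the inclusion $U_\alpha \subs U_{\alpha+1}$ is, up to the isomorphism $u_\alpha$, the embedding $e$. Then $U_{\alpha+1} \iso \U$ (since $e\maps \U \to \U$), $U_{\alpha+1} \supsetneq U_\alpha$ (since $e$ is non-trivial), and each $h \in G_\alpha$ extends to some $\tilde h \in \aut{U_{\alpha+1}}$ by $G$-extensibility; define $s_\alpha^{\alpha+1}(h) = \tilde h$ (choosing one such extension), and set $s_\beta^{\alpha+1} = s_\alpha^{\alpha+1} \cmp s_\beta^\alpha$ for $\beta < \alpha$, which keeps the family coherent. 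Finally let $G_{\alpha+1}$ be a countable family of automorphisms of $U_{\alpha+1}$ witnessing homogeneity and containing $s_\alpha^{\alpha+1}[G_\alpha]$; again such a family exists by countability of $U_{\alpha+1}$. At a limit stage $\delta < \omega_1$, Lemma~\ref{thm:limit_extension} gives $U_\delta$, $G_\delta$ and the maps $s_\alpha^\delta$ with all required properties, and moreover $\card{U_\delta} = \card{\delta} + \sup_{\alpha<\delta}\card{U_\alpha} = \aleph_0$ and $\Fin(U_\delta) = \bigcup_{\alpha<\delta}\Fin(U_\alpha) = \Ef$, so $U_\delta$ is the injective countable structure of age $\Ef$, i.e. $U_\delta \iso \U$ by \fra\ Theorem, and $G_\delta$ is countable as shown in the lemma; thus the recursion continues. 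This produces the extensible $\U$-chain of length $\omega_1$.

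For the concluding claim, set $U^\ast = \bigcup_{\alpha<\omega_1} U_\alpha$ with $G^\ast = \bigcup_{\alpha<\omega_1} s_\alpha^{\omega_1}[G_\alpha]$ (where $s_\alpha^{\omega_1}(h) = \bigcup_{\beta \in (\alpha,\omega_1)} s_\alpha^\beta(h)$ as in the lemma). Then $\card{U^\ast} = \aleph_1$ because each inclusion $U_\alpha \subsetneq U_{\alpha+1}$ is proper; $\Fin(U^\ast) = \bigcup_{\alpha<\omega_1}\Fin(U_\alpha) = \Ef$ since every finitely generated substructure of $U^\ast$ lies in some $U_\alpha$; and $U^\ast$ is homogeneous because any isomorphism $f$ between finitely generated substructures of $U^\ast$ already lives inside some $U_\alpha$, hence extends to some $h \in G_\alpha$, which in turn extends to $s_\alpha^{\omega_1}(h) \in \aut{U^\ast}$.

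I expect the main obstacle to be purely bookkeeping rather than conceptual: one must take genuine care that the successor step produces a \emph{coherent} family $\set{s_\alpha^\beta}$ — the newly chosen extension $s_\alpha^{\alpha+1}(h)$ is only required to extend $h$, so coherence of $s_\beta^{\alpha+1}$ for $\beta<\alpha$ must be forced by the defining formula $s_\beta^{\alpha+1} = s_\alpha^{\alpha+1}\cmp s_\beta^\alpha$, and one should check this is consistent with the colimit formula used at limit stages — and that the auxiliary witnessing families $G_\alpha$ stay countable while containing the images of the previous ones, which is where "of cardinality $\leq \card{U_\alpha}$" in the definition of extensible chain does the work. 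Since at every stage all structures are countable, the hypothesis of the proposition (non-trivial $G$-extensibility for \emph{countable} $G$) is always applicable, so the recursion never stalls.
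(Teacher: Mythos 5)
Your proposal is correct and follows essentially the same route as the paper's proof: the same transfinite recursion with $U_0=\U$, the same successor step (transporting $G_\alpha$ along an isomorphism to $\aut\U$, invoking the hypothesis, and realizing the resulting non-trivial $G$-extensible embedding as a proper inclusion), the same use of Lemma~\ref{thm:limit_extension} at limit stages together with uniqueness of the \fra\ limit to keep $U_\delta\iso\U$, and the same final application of the limit lemma at $\omega_1$. The coherence bookkeeping via $s_\beta^{\alpha+1}=s_\alpha^{\alpha+1}\cmp s_\beta^\alpha$ is exactly what the paper does.
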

\begin{pf}
	First observe that by the assumption, for every $U_\alpha \iso \U$ and countable $G_\alpha \subs \aut{U_\alpha}$ there exists $U_{\alpha + 1} \supsetneq U_\alpha$ isomorphic to $U_\alpha$ such that the inclusion $U_\alpha \subsetneq U_{\alpha + 1}$ is $G_\alpha$-extensible.
	This is done by fixing an isomorphism $i\maps U_\alpha \to \U$, putting $G = i G_\alpha i^{-1}$, and finding a suitable isomorphism $j$, as in the following diagram.
	\[\begin{tikzcd}
		\U \ar[r, "e"] & \U \ar[d, "j"] \\
		U_\alpha \ar[u, "i"] \ar[r, "\subsetneq"'] & U_{\alpha + 1}
	\end{tikzcd}\]
	
	To build an extensible $\U$-chain of length $\omega_1$, we start with $U_0 = \U$ and $G_0 \subs \aut{\U}$ being a countable family witnessing the homogeneity of $\U$.
	Suppose we already have $\set{U_\alpha, G_\alpha, s_\alpha^\beta}_{\alpha < \beta \leq \delta}$ for some $\delta < \omega_1$.
	We use the above observation to obtain a $G_\delta$-extensible inclusion $U_\delta \subsetneq U_{\delta + 1}$.
	Then we let $s_\delta^{\delta + 1}$ be any map witnessing the $G_\delta$-extensibility and we define $G_{\delta + 1}$ as the union of $s_{\delta}^{\delta+1}[G_\delta]$ and any countable family of automorphisms of $U_{\delta + 1}$ witnessing its homogeneity.
	Finally, we put $s^{\delta+1}_\alpha = s^{\delta+1}_\delta \cmp s^\delta_\alpha$ for every $\alpha < \delta$.
	This inductively defines the chain at successor steps.
	Lemma~\ref{thm:limit_extension} takes care of the limit steps.
	Here we note that for limit $\delta < \omega_1$, $U_\delta$ remains countable homogeneous with age $\Ef$, and so is isomorphic to $\U$ by the uniqueness of the Fraïssé limit, and the construction may continue.
	This finishes the construction of the extensible $\U$-chain $\set{U_\alpha, G_\alpha, s_\alpha^\beta}_{\alpha<\beta<\omega_1}$.
	
	Finally, again by Lemma~\ref{thm:limit_extension}, the chain can be extended to a homogeneous structure $U_{\omega_1} = \bigcup_{\alpha<\omega_1} U_\alpha$ of cardinality $\aleph_1$ and age $\Ef$.
\end{pf}

\begin{tw} \label{thm:main} Suppose that $\U$ is a \fra\ limit and $\cM$ is a non-degenerate sub-monoid of $\emb\U$ containing all automorphisms of $\U$. If $\cM$ has the amalgamation property then there exists a homogeneous structure $\U_{\omega_1}$ of cardinality $\aleph_1$ that is the union of a chain of structures isomorphic to $\U$.
\end{tw}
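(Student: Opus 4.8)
The plan is to reduce the statement to Proposition~\ref{thm:countably_extensible}: it suffices to produce, for an arbitrary countable $G \subseteq \aut{\U}$, a non-trivial $G$-extensible self-embedding $\map e \U \U$. So fix such a $G$ and fix a non-invertible element $f_* \in \cM$, which exists because $\cM$ is non-degenerate.

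First I would run a closing-off argument inside $\cM$ to obtain a countable sub-monoid $\cM_0 \le \cM$ containing $G \cup \set{f_*}$ which itself has the amalgamation property: starting from $G \cup \set{\id \U, f_*}$, in $\omega$ steps one adjoins, for every pair $a, b$ already present, some $a', b' \in \cM$ with $a' \cmp a = b' \cmp b$ (available by the amalgamation property of $\cM$) and closes under composition; the union $\cM_0$ is a countable sub-monoid of $\cM$ in which every pair of elements amalgamates. Viewing $\cM_0$ as a one-object category --- one isomorphism type of object $\U$, countably many morphisms, the amalgamation property, and trivially the joint embedding property --- abstract \fra\ theory~\cite{Kub40} supplies a \fra\ sequence $\vec u = \seq{U_n, u_n^{n+1} : n \in \omega}$ with each $U_n \iso \U$ and each $u_n^{n+1} \in \cM_0$. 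By Lemma~\ref{thm:limitU} its colimit $U_\infty$ is isomorphic to $\U$; writing $\map{u_n^\infty}{U_n}{U_\infty}$ for the co-limiting embeddings and fixing an isomorphism $U_\infty \to \U$, put $e := u_0^\infty$, regarded as a self-embedding $\map e \U \U$.

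Next I would check that $e$ is non-trivial, that is, not surjective. Here I use the elementary fact that a left-invertible element of $\emb \U$ is invertible: if $g \cmp f = \id \U$ then $f \cmp g$ is an idempotent embedding of $\U$, hence the identity on its image, and an idempotent embedding of $\U$ onto a proper substructure would violate injectivity, so $f \cmp g = \id \U$ and $f \in \aut{\U}$. By the absorbing property of the \fra\ sequence there are $m$ and $t \in \cM_0$ with $t \cmp f_* = u_0^m$. Were $e$ an isomorphism, then $u_0^m$ (the inner factor of $u_0^\infty = u_m^\infty \cmp u_0^m$) would be left-invertible, and hence so would $f_*$ (a left inverse of $u_0^m$ pre-composed with $t$ is a left inverse of $f_*$), making $f_*$ invertible --- contrary to its choice. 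Hence $e$ is not surjective.

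Finally, for $G$-extensibility: given $h \in G$, note that $h$ is an automorphism of $\U = U_0$ lying in $\cM_0$, so the homogeneity of the \fra\ limit~\cite{Kub40} yields an automorphism $\tilde h$ of $U_\infty$ with $\tilde h \cmp u_0^\infty = u_0^\infty \cmp h$; transporting $\tilde h$ along the fixed isomorphism $U_\infty \to \U$ produces an automorphism of $\U$, again denoted $\tilde h$, with $\tilde h \cmp e = e \cmp h$. Thus $e$ is a non-trivial $G$-extensible self-embedding of $\U$, and Proposition~\ref{thm:countably_extensible} delivers a homogeneous structure of cardinality $\aleph_1$ with age $\Ef$, which by the construction there (see Lemma~\ref{thm:limit_extension}) is the union of a chain of copies of $\U$; this is the desired $\U_{\omega_1}$. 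The point demanding the most care is the non-triviality of $e$: one must be sure that the non-invertible $f_*$ is genuinely absorbed by the \fra\ sequence and that this really prevents $e$ from being onto, which is exactly where the ``left-invertible implies invertible'' fact enters; the rest is routine bookkeeping on top of the already-established Lemma~\ref{thm:limit_extension} and Proposition~\ref{thm:countably_extensible}.
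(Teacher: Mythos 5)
Your proposal is correct and follows essentially the same route as the paper: close off to a countable amalgamable sub-monoid $\cM_0$ containing $G$ and a non-invertible element, take a \fra\ sequence in $\cM_0$, identify the colimit with $\U$ via Lemma~\ref{thm:limitU}, observe that $u_0^\infty$ is non-trivial by absorption of the non-invertible element and $G$-extensible by homogeneity of the \fra\ sequence, and then invoke Proposition~\ref{thm:countably_extensible}. The only (immaterial) difference is that you derive non-triviality from ``left-invertible embeddings are invertible,'' whereas the paper notes directly that composing with a non-surjective embedding yields a non-surjective embedding.
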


\begin{pf}
	Fix a countable group $G \loe \aut \U$. Using a simple closing-off argument, we can find a countable non-degenerate sub-monoid $\cM_0$ containing $G$ and having the amalgamation property.
	So $\cM_0$ admits a \fra\ sequence $\vec u$, as below.
	$$\begin{tikzcd}
		\U \ar[r, "u_0^1"] & \U \ar[r, "u_1^2"] & \U \ar[r] & \cdots
	\end{tikzcd}$$
	This means (cf.~\cite[Section 3]{Kub40}) that for every $\ntr$, for every embedding $\map f {\U}{\U}$ there exist $m>n$ and an embedding $\map g{\U}{\U}$ such that $g \cmp f = u_n^m$.
	In particular, taking $f$ to be any nontrivial embedding, we see that there is $n>0$ such that $u_0^n$ is nontrivial.
	
	The colimit of $\vec u$ is again $\U$, by Lemma~\ref{thm:limitU}.
	Denote by $u_0^\infty$ the colimiting\footnote{Once the sequence above is a chain, $u_0^\infty$ is simply a suitable inclusion of a copy of $\U$ into another copy of $\U$; otherwise $u_0^\infty$ is defined up to isomorphism as part of the universal co-cone.} self-embedding of $\U$, which is necessarily nontrivial, because it is of the form $u_n^\infty \cmp u_0^n$ for $n$ big enough so that $u_0^n$ is nontrivial.
	Homogeneity of a \fra\ sequence (cf.~\cite[Proposition 3.14]{Kub40})
	says, in particular, that for every $g \in G$ there exists an automorphism $\tilde g$ of $\U$ such that the diagram
	$$\begin{tikzcd}
		\U \ar[d, "g"'] \ar[rrr, "u_0^\infty"]&&& \U \ar[d, "\tilde{g}"] \\
		\U \ar[rrr, "u_0^\infty"']&&& \U
	\end{tikzcd}$$
	commutes.
	Thus, the colimiting embedding $\map{u_0^\infty}{\U}{\U}$ is $G$-extensible.
	It follows that the assumptions of Proposition~\ref{thm:countably_extensible} are fulfilled.
\end{pf}

\begin{uwgi} \label{rm:amalgamation}
	Note that if the whole class of countable structures $\sig\Ef$ has the amalgamation property, then so does the monoid $\emb\U$:
	for every $f, g \in \emb\U$ there is $X \in \sig\Ef$ and embeddings $\map {f', g'} \U X$ such that $f' \cmp f = g' \cmp g$.
	Since $\U$ is universal in $\sig\Ef$, there is an embedding $\map e X \U$, and so we have $(e \cmp f') \cmp f = (e \cmp g') \cmp g$ in the monoid.
	In fact, $\emb\U$ has AP if and only if $\U$ is an amalgamation base in $\sig\Ef$: by the universality of $\U$, every pair of embeddings $\map f \U X$, $\map g \U Y$ in $\sig\Ef$ can be extended to a pair $f', g' \in \emb\U$.	
\end{uwgi}

\subsection{Towards the converse}\label{SubSecteohwroghweoig}

Let $\Ef$ and $\U$ be as above, i.e. $\Ef$ is a Fraïssé class with a countable limit $\U$. Assume $\Wu$ is an uncountable homogeneous structure with age $\Ef$.
The following considerations show that in this case there exist a non-trivial ``almost'' extensible embedding.

Let us fix a large enough cardinal $\theta$ and a countable elementary submodel $M$ of $\pair{H(\theta)}{\in}$ such that $\Wu \in M$. Then $\Ef, \U \in M$.
Let $W_0 := \Wu \cap M$. Then $W_0 \iso \U$, because $M$ ``knows'' all isomorphisms between finitely generated substructures of $W_0$ and by the homogeneity of $\Wu$, $M$ ``knows'' that each of them can be extended to an automorphism of $\Wu$.
Note that $M \cap \aut \Wu$ is a countable group and
$$G_M := \setof{g \rest W_0}{g \in M \cap \aut \Wu}$$
is a countable subgroup of $\aut W_0$.
Now choose a countable elementary submodel $M'$ of $H(\theta)$ such that $M \in M'$. In particular, $M \cap \aut \Wu \in M'$. Let $W_1 := \Wu \cap M'$. Then again $W_1 \iso \U$ and each $g \in G_M$ extends to an automorphism of $W_1$, because $W_1$ is invariant with respect to all automorphisms from $M' \cap \aut \Wu$. It follows that the inclusion $W_0 \subs W_1$ is $G_M$-extensible.

Using transfinite induction we obtain a continuous $\omega_1$-chain $\sett{W_\al}{\al<\omega_1}$ of structures isomorphic to $\U$ and a chain $\sett{G_\al}{\al<\omega_1}$ of countable subgroups of $\aut \Wu$ such that for each $\al<\beta$ the inclusion $W_\al \subs W_\beta$ is $(G_\al \rest W_\al)$-extensible, where $G_\al \rest W_\al := \setof{g \rest W_\al}{g \in G_\al} \loe \aut W_\al$. 

\begin{uwgi}
	We do not know whether, in the setting above, for every countable group $G \loe \aut \U$ there is a non-trivial $G$-extensible embedding of $\U$, but we know it for every countable $G \loe \aut \Wu$, where $G$-extensibility is understood as $(G \rest W)$-extensibility for a suitable $W \loe \Wu$.
\end{uwgi}

Using the notion of extensible chains, the closing-off construction can be made precise, giving a characterization existence of an uncountable homogeneous structure with the age of a Fraïssé limit.

\begin{lm} \label{thm:closing_off}
	Let $\Wu$ be a homogeneous structure with age $\Ef$.
	For every infinite $X \subs \Wu$ and $H_0 \subs \aut{\Wu}$ such that $\card{H_0} \leq \card{X}$ there exist a homogeneous structure $U$ with $X \subs U \leq \Wu$ and $\card{X} = \card{U}$, and $H \subs \aut{\Wu}$ such that $H_0 \subs H$ and $\card{H} \leq \card{X}$. Furthermore, $U$ is invariant under the action of $H$ and this action witnesses that $U$ is homogeneous.
\end{lm}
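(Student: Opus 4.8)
The plan is to make precise the elementary-submodel closing-off sketched above, but carried out ``by hand'' as an increasing $\omega$-chain. Set $\kappa = \card X$. I would construct by recursion on $n \in \omega$ an increasing chain of substructures $U^0 \subs U^1 \subs \cdots$ of $\Wu$ and an increasing chain of sets of automorphisms $H^0 \subs H^1 \subs \cdots$ with $H^n \subs \aut{\Wu}$, starting from $U^0 =$ the substructure of $\Wu$ generated by $X$ and $H^0 = H_0$. Given $(U^n, H^n)$, the recursion step performs two closing-off moves: \textbf{(a)} it records the action of the automorphisms already collected, by insisting that $U^{n+1}$ contain $h[U^n] \cup h^{-1}[U^n]$ for every $h \in H^n$; and \textbf{(b)} it feeds new automorphisms into $H^{n+1}$, one for each isomorphism $\phi$ between finitely generated substructures of $U^n$, chosen using the homogeneity of $\Wu$ to be some $\hat\phi \in \aut{\Wu}$ extending $\phi$. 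Concretely, $H^{n+1} = H^n \cup \setof{\hat\phi}{\phi \text{ an isomorphism between finitely generated substructures of } U^n}$ and $U^{n+1}$ is the substructure of $\Wu$ generated by $U^n \cup \bigcup_{h \in H^n}(h[U^n] \cup h^{-1}[U^n])$. Finally I would put $U = \bigcup_n U^n$ and $H = \bigcup_n H^n$.

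For the cardinal bookkeeping, the key point is that there are at most $\kappa$ isomorphisms between finitely generated substructures of $U^n$: the language is countable, so a finitely generated substructure is determined by a finite generating tuple, and such an isomorphism is determined by its domain together with the image of that tuple, i.e. by a pair of finite tuples from $U^n$; hence the count is bounded by $\card{\fin{U^n}}^2 = \kappa$ once $\card{U^n} = \kappa$. Consequently $\card{H^{n+1}} \le \kappa$, the displayed generating set for $U^{n+1}$ has size $\le \kappa$, and $\card{U^{n+1}} = \kappa$ (it contains $U^0 \supseteq X$). Passing to the unions yields $X \subs U \le \Wu$ with $\card U = \kappa = \card X$, and $H_0 \subs H \subs \aut{\Wu}$ with $\card H \le \aleph_0 \cdot \kappa = \kappa$. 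That $U$ is a substructure of $\Wu$ is automatic, being the union of an increasing chain of substructures.

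It then remains to verify invariance and homogeneity. For invariance: if $h \in H$ then $h \in H^n$ for some $n$, and move (a) at each stage $m \ge n$ forces $h[U^m] \subs U^{m+1}$ and $h^{-1}[U^m] \subs U^{m+1}$; taking the union over $m \ge n$ gives $h[U] \subs U$ \emph{and} $h^{-1}[U] \subs U$, and applying $h$ to the second inclusion upgrades the first to $h[U] = U$. Since $h \in \aut{\Wu}$ and $U$ is a substructure of $\Wu$ with $h[U] = U$, the restriction $h \rest U$ is an automorphism of $U$, so $H$ acts on $U$. For homogeneity witnessed by this action: given an isomorphism $\phi$ between finitely generated substructures $A, B$ of $U$, its finitely many generators lie in some $U^n$, so --- $U^n$ being a substructure of $\Wu$ --- $A$ and $B$ are finitely generated substructures of $U^n$ and $\phi$ was treated in move (b) at stage $n$; hence $\hat\phi \in H^{n+1} \subs H$ extends $\phi$, and by the previous point $\hat\phi \rest U \in \aut U$. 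Thus $U$ is homogeneous (it is infinite since $X$ is), and the $H$-action witnesses it.

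I expect no serious obstacle; the one genuine design decision is the alternation between moves (a) and (b), reflecting the fact that a single closure does not suffice --- $U$ must \emph{simultaneously} be closed under the functions of the language, invariant under $H$, and ``rich enough'' to carry extensions of all its finite isomorphisms --- so these requirements have to be interleaved and absorbed in the limit. The easy-to-miss point is closing under $h^{-1}$ as well as $h$ in move (a), which is exactly what turns $h[U] \subs U$ into $h[U] = U$; the remaining cardinal arithmetic ($\kappa + \kappa = \kappa \cdot \kappa = \aleph_0 \cdot \kappa = \kappa$ for infinite $\kappa$) and the verification that substructures and automorphisms behave well under the union are routine.
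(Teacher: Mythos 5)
Your proof is correct and follows essentially the same closing-off argument as the paper: an $\omega$-chain alternating between orbit-closure under the automorphisms collected so far and adding, via homogeneity of $\Wu$, one global automorphism per finite partial isomorphism, with the same cardinality bookkeeping. The only cosmetic difference is that you absorb $h[U^n]\cup h^{-1}[U^n]$ one stage at a time rather than taking the full closure under powers in a single step, which the $\omega$-iteration renders equivalent.
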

\begin{pf}
	Let $X' = \bigcup\set{h^k[X]: h \in H_0,\, k \in \Zee}$, i.e. the closure of $X$ under the action by the automorphisms from $H_0$, and let $X_0 \leq \Wu$ be the substructure generated by $X'$.
	Note that $X_0$ is still invariant under the action of $H_0$ and that $\card{X_0} = \card{X}$.
	Let $H_1 \subs \aut{\Wu}$ be such that $H_0 \subs H_1$, $\card{H_1} \leq \card{X}$, and every isomorphism of finitely generated substructures of $X_0$ extends to an automorphism from $H_1$.
	
	We continue this way by induction: given $X_n$ and $H_{n + 1}$ we let $X_{n + 1}$ be the substructure of $\Wu$ generated by the closure of $X_n$ under the action of $H_{n + 1}$, and we let $H_{n + 2} \supseteq H_{n + 1}$ be of cardinality $\leq \card{X}$ and extending every isomorphism of finitely generated substructures of $X_{n + 1}$.
	
	Finally, we put $U = \bigcup_{n \in \omega} X_n$ and $H = \bigcup_{n \in \omega} H_n$.
	Clearly, $X \subs U \leq \Wu$ and $\card{U} = \card{X}$ and $\card{H} \leq \card{X}$.
	Every isomorphism of finitely generated substructures of $U$ is already captured by some $X_n$, and so extends to $h \in H_{n + 1} \subs H$.
	Since $h[X_m] = X_m$ for every $m > n$, we have that $h \rest U$ is a well-defined automorphism of $U$.
	Hence, $U$ is invariant under the action of $H$, and the action witnesses that $U$ is homogeneous.
\end{pf}

\begin{prop} \label{thm:chain_necessary}
	Let $\Wu$ be an uncountable homogeneous structure of cardinality $\lambda$ with age $\Ef$.
	For every infinite $X \subs \Wu$ of cardinality $\lambda_0 < \lambda$ there is an extensible chain $\set{U_\alpha, G_\alpha, s_\alpha^\beta}_{\alpha<\beta\leq\lambda}$ such that $U_0 \supseteq X$, $U_\lambda = \Wu$, and $\min\set{\alpha \leq \lambda: \card{U_\alpha} = \kappa} = \kappa$ for every cardinal $\kappa \in (\lambda_0, \lambda]$.
\end{prop}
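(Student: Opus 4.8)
The plan is to run a transfinite recursion that builds a continuous, strictly $\subseteq$-increasing chain of substructures $U_\alpha \leq \Wu$ ($\alpha \leq \lambda$) together with an increasing continuous chain of sets of automorphisms $H_\alpha \subseteq \aut\Wu$, and then to extract from it the data $\setof{U_\alpha, G_\alpha, s_\alpha^\beta}{\alpha < \beta \leq \lambda}$ of an extensible chain. Throughout I would maintain the invariants: $X \subseteq U_0$ and $\card{U_0} = \lambda_0$; $\card{U_\alpha} = \lambda_0 + \card\alpha$ for $\alpha < \lambda$ while $U_\lambda = \Wu$; each $U_\alpha$ is homogeneous; $\card{H_\alpha} \leq \card{U_\alpha}$; $U_\alpha$ is invariant under every $g \in H_\alpha$; and $G_\alpha := \setof{g \rest U_\alpha}{g \in H_\alpha}$ (each such restriction being an automorphism of $U_\alpha$, by invariance) is a family of automorphisms of $U_\alpha$ witnessing its homogeneity. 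The cardinality invariant immediately gives the required growth rate: for a cardinal $\kappa \in (\lambda_0, \lambda]$ one has $\card{U_\alpha} = \lambda_0 + \card\alpha < \kappa$ for every $\alpha < \kappa$ (both summands being $< \kappa$), whereas $\card{U_\kappa} = \lambda_0 + \kappa = \kappa$, so $\min\setof{\alpha \leq \lambda}{\card{U_\alpha} = \kappa} = \kappa$.

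For the recursion I would fix in advance an enumeration $\Wu = \setof{w_\xi}{\xi < \lambda}$, an enumeration $\setof{p_\xi}{\xi < \lambda}$ of all isomorphisms between finitely generated substructures of $\Wu$ (there are at most $\lambda$ of each), and for each $\xi$ some $h_\xi \in \aut\Wu$ extending $p_\xi$ (using homogeneity of $\Wu$). I would start by applying Lemma~\ref{thm:closing_off} to $X$ to obtain $U_0$ and $H_0$. At a successor step, given $(U_\alpha, H_\alpha)$, I would apply Lemma~\ref{thm:closing_off} with the infinite set $U_\alpha \cup \set{w_\alpha, w}$ in place of $X$ (where $w \in \Wu \setminus U_\alpha$ exists since $\card{U_\alpha} < \lambda$) and with $H_\alpha \cup \set{h_\alpha}$ in place of $H_0$; this returns $U_{\alpha+1} \supsetneq U_\alpha$ homogeneous with $\card{U_{\alpha+1}} = \card{U_\alpha}$, together with $H_{\alpha+1} \supseteq H_\alpha \cup \set{h_\alpha}$ of cardinality $\leq \card{U_{\alpha+1}}$ under which $U_{\alpha+1}$ is invariant and which witnesses homogeneity. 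At a limit step $\delta$ I would set $U_\delta := \bigcup_{\alpha < \delta} U_\alpha$ and $H_\delta := \bigcup_{\alpha < \delta} H_\alpha$; the equality $\card{U_\delta} = \lambda_0 + \card\delta$ follows from strict increase (giving $\geq \card\delta$) together with the union bound, $U_\delta$ is $g$-invariant for each $g \in H_\delta$ because $g$ lies in cofinally many $H_\beta$, and $G_\delta$ witnesses homogeneity of $U_\delta$ since every isomorphism between finitely generated substructures of $U_\delta$ is captured at some $\alpha < \delta$. Feeding in $w_\alpha$ at each step forces $U_\lambda = \Wu$, and feeding in $h_\alpha$ forces $\setof{h_\xi}{\xi < \lambda} \subseteq H_\lambda$, so $G_\lambda = H_\lambda$ witnesses homogeneity of $U_\lambda = \Wu$, with $\card{H_\lambda} \leq \lambda$.

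It remains to produce the coherent family $\setof{s_\alpha^\beta}{\alpha < \beta \leq \lambda}$, and this is the one genuinely delicate point: one cannot simply set $s_\alpha^\beta(g \rest U_\alpha) := g \rest U_\beta$, since two automorphisms in $H_\alpha$ may agree on $U_\alpha$ while differing on $U_\beta$, making this ill-defined. I would cure this by building, alongside the recursion, a coherent system of sections $\sigma_\alpha \maps G_\alpha \to H_\alpha$ of the restriction maps, i.e. with $\sigma_\alpha(h) \rest U_\alpha = h$ and, for $\alpha < \beta$, $\sigma_\beta(\sigma_\alpha(h) \rest U_\beta) = \sigma_\alpha(h)$. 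These are built recursively: at level $\beta$ the values of $\sigma_\beta$ on $\setof{\sigma_\alpha(h) \rest U_\beta}{\alpha < \beta,\ h \in G_\alpha}$ are forced to be $\sigma_\alpha(h) \rest U_\beta \mapsto \sigma_\alpha(h)$, and this is consistent because each $\sigma_\alpha$ is injective and the ranges $R_\alpha := \sigma_\alpha[G_\alpha] \subseteq H_\alpha$ form an increasing chain (indeed $g \in R_\alpha$ yields, already from the forced part, $\sigma_\beta(g \rest U_\beta) = g$, hence $g \in R_\beta$), so two distinct members of $\bigcup_{\alpha < \beta} R_\alpha$ lie in a common $R_\gamma$ with $\gamma < \beta$ and thus differ on $U_\gamma \subseteq U_\beta$; one then extends $\sigma_\beta$ arbitrarily to a section on all of $G_\beta$. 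Setting $s_\alpha^\beta(h) := \sigma_\alpha(h) \rest U_\beta$ gives a well-defined map $G_\alpha \to G_\beta$ with $h \subseteq s_\alpha^\beta(h)$ (so it witnesses $G_\alpha$-extensibility of $U_\alpha \subseteq U_\beta$), and coherence $s_\beta^\gamma \cmp s_\alpha^\beta = s_\alpha^\gamma$ is immediate from $\sigma_\beta(\sigma_\alpha(h) \rest U_\beta) = \sigma_\alpha(h)$. Then $\setof{U_\alpha, G_\alpha, s_\alpha^\beta}{\alpha < \beta \leq \lambda}$ is an extensible chain with $U_0 \supseteq X$, $U_\lambda = \Wu$, and the prescribed growth rate.

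I expect the main obstacle to be exactly this coherence: realizing all the witnessing families $G_\alpha$ as restrictions of a single increasing family $H_\alpha$ of automorphisms of $\Wu$ and threading the lifts $\sigma_\alpha(h)$ consistently through all levels, so that the unions taken at limit stages (cf. the computation in Lemma~\ref{thm:limit_extension}) remain of the form $H_\delta \rest U_\delta$. A secondary, bookkeeping-flavoured point is the need to interleave three tasks — exhausting the underlying set $\Wu$, extending every finite partial isomorphism of $\Wu$, and keeping each $U_\alpha$ invariant under $H_\alpha$ — which the construction handles by passing one fresh point $w_\alpha$ and one fresh automorphism $h_\alpha$ into Lemma~\ref{thm:closing_off} at every successor step, so that both exhaustions are completed by stage $\lambda$ while invariance is preserved at all stages.
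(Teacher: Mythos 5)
Your proposal is correct and follows essentially the same route as the paper's proof: a transfinite recursion alternating Lemma~\ref{thm:closing_off} with point-exhaustion to build the chains $U_\alpha$ and $H_\alpha$, followed by a coherent system of sections of the restriction maps $H_\alpha\to G_\alpha$ to define $s_\alpha^\beta$, with the same well-definedness argument (your increasing-ranges formulation is just a rephrasing of the paper's cancellation argument). The only difference is your extra enumeration of partial isomorphisms of $\Wu$ and the automorphisms $h_\alpha$ fed in at successor steps, which is harmless but unnecessary, since every finitely generated substructure of $U_\lambda=\Wu$ already lies in some $U_\alpha$ with $\alpha<\lambda$, so homogeneity of $U_\lambda$ is witnessed by $G_\lambda$ exactly as at earlier limit stages.
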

\begin{pf}
	First we define strictly increasing continuous chains $\set{U_\alpha}_{\alpha\leq\lambda}$ and $\set{H_\alpha}_{\alpha\leq\lambda}$.
	By Lemma~\ref{thm:closing_off} we obtain $U_0$ and $H_0$.
	Let $\set{w_\alpha}_{\alpha \in \lambda}$ be an enumeration of $\Wu$.
	Given $U_\alpha$ and $H_\alpha$ we apply Lemma~\ref{thm:closing_off} to $U_\alpha \cup \set{w_\alpha, x_\alpha}$ for some $x_\alpha \in \Wu \setminus U_\alpha$ and to $H_\alpha$. This way we obtain $U_{\alpha + 1}$ and $H_{\alpha + 1}$.
	For $\alpha \leq \lambda$ limit we put $U_\alpha = \bigcup_{\beta < \alpha} U_\beta$ and $H_\alpha = \bigcup_{\beta < \alpha} H_\beta$.
	
	Note that $\card{U_{\alpha + 1}} = \card{U_\alpha}$ for every $\alpha < \lambda$, so the cardinality can increase only at limit steps.
	For limit $\alpha \leq \lambda$ we inductively have
	\[\textstyle
		\card{H_\alpha} \leq \sum_{\beta<\alpha} \card{H_\beta} \leq \sum_{\beta<\alpha} \card{U_\beta} = \card{\alpha} + \sup_{\beta<\alpha} \card{U_\beta} = \card{U_\alpha},
	\]
	where the equalities follow from standard cardinal arithmetics and are proved as in the proof of Lemma~\ref{thm:limit_extension}.
	Hence, the assumption $\card{H_\alpha} \leq \card{U_\alpha}$ gets preserved at limit steps, 
	we have $\min\set{\alpha \leq \lambda: \card{U_\alpha} = \kappa} = \kappa$ for every cardinal $\kappa \in (\lambda_0, \lambda]$, and $U_\alpha \subsetneq \Wu$ for every $\alpha < \lambda$, and so the construction may proceed.
	Moreover, $U_\lambda = \Wu$ as it contains every point $w_\alpha$.
	
	Every $U_\alpha$ is homogeneous and invariant under the action of $H_\alpha$, and the homogeneity is witnessed by $G_\alpha = \set{h\rest U_\alpha: h \in H_\alpha}$.
	This is true at $0$ and at successor steps by Lemma~\ref{thm:closing_off} and remains automatically true at limit steps by continuity.
	
	Finally we need to define the coherent family $\set{s_\alpha^\beta\maps G_\alpha \to G_\beta}_{\alpha < \beta \leq \lambda}$.
	Let $r_\alpha\maps H_\alpha \to G_\alpha$ denote the restriction $h \mapsto h \rest U_\alpha$ for every $\alpha \leq \lambda$.
	We shall define sections $e_\alpha\maps G_\alpha \to H_\alpha$, i.e. maps satisfying $r_\alpha \cmp e_\alpha = \id{G_\alpha}$, or equivalently $g \subs e_\alpha(g)$ for every $g \in G_\alpha$.
	Then we put $s^\beta_\alpha = r_\beta \cmp e_\alpha$ for every $\alpha < \beta \leq \lambda$.
	The coherence condition $s^\gamma_\beta \cmp s^\beta_\alpha = s^\gamma_\alpha$ follows from the coherence condition $e_\beta \cmp s^\beta_\alpha  = e_\alpha$, which we assure by an inductive definition of the sections $e_\alpha$.
	
	We simply put $e_\beta(h) = e_\alpha(g)$ whenever $h = s^\beta_\alpha(g)$ for some $\alpha < \beta$ and $g \in G_\alpha$ (and define $e_\beta(h)$ arbitrarily so that it is a section, for $h$ not of the form $s^\beta_\alpha(g)$), but we need to show that the definition is correct.
	First, $h = s^\beta_\alpha(g)$ uniquely determines $g$ as $g = r_\alpha(h)$ since $r_\alpha \cmp r_\beta = r_\alpha$.
	Second, if $s^\beta_\alpha(g) = s^\beta_{\alpha'}(g')$ for some $\alpha < \alpha' < \beta$ and $g \in G_\alpha$, $g' \in G_{\alpha'}$, then $s^{\alpha'}_\alpha(g) = g'$ since $s^\beta_\alpha = s^\beta_{\alpha'} \cmp s^{\alpha'}_\alpha$ by the induction hypothesis and since we may cancel the injective map $s^\beta_{\alpha'}$.
	Hence, $e_{\alpha'}(g') = e_{\alpha'}(s^{\alpha'}_\alpha(g)) = e_\alpha(g)$ by the induction hypothesis.
\end{pf}

\begin{wn}
	The family of cardinalities of uncountable homogeneous structures with age $\Ef$ is downwards closed.
\end{wn}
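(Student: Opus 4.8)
The plan is to read the corollary off Proposition~\ref{thm:chain_necessary} directly. Suppose $\Wu$ is an uncountable homogeneous structure with age $\Ef$ and cardinality $\lambda$, and let $\kappa$ be any cardinal with $\aleph_1 \leq \kappa \leq \lambda$; the goal is to exhibit a homogeneous structure with age $\Ef$ of cardinality exactly $\kappa$. First I would fix a well-chosen countable $X \subs \Wu$: since $\Ef$ is a \fra\ class it has only countably many isomorphism types, each embeddable into $\Wu$, so picking for every type a finite generating tuple of one such copy and letting $X$ be the substructure of $\Wu$ generated by the countable union of these tuples yields a countable substructure $X \leq \Wu$ with $\Fin(X) = \Ef$. (Here I use that the language is countable, so $X$ is genuinely countable, and that $\Fin(X) \subs \Fin(\Wu) = \Ef$ is automatic.) Note $\card{X} = \aleph_0 < \lambda$ since $\Wu$ is uncountable.

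Next I would apply Proposition~\ref{thm:chain_necessary} to $\Wu$ and this $X$, with $\lambda_0 = \aleph_0$. Because $\kappa \in (\lambda_0, \lambda]$, the proposition produces an extensible chain $\set{U_\alpha, G_\alpha, s_\alpha^\beta}_{\alpha < \beta \leq \lambda}$ with $U_0 \supseteq X$, $U_\lambda = \Wu$, and $\min\set{\alpha \leq \lambda : \card{U_\alpha} = \kappa} = \kappa$; in particular $\card{U_\kappa} = \kappa$. By the definition of an extensible chain, $U_\kappa$ is homogeneous, and since $X \subs U_0 \subs U_\kappa \subs \Wu$ we get
\[
	\Ef = \Fin(X) \subs \Fin(U_0) \subs \Fin(U_\kappa) \subs \Fin(\Wu) = \Ef,
\]
so $\Fin(U_\kappa) = \Ef$. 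Hence $U_\kappa$ is an uncountable homogeneous structure with age $\Ef$ of cardinality $\kappa$, which is precisely what is needed to conclude that the family of cardinalities of uncountable homogeneous structures with age $\Ef$ is downwards closed.

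I expect no serious obstacle: the whole content is Proposition~\ref{thm:chain_necessary}, and the one place to be slightly careful is the choice of $X$ --- an arbitrary countable substructure of $\Wu$ need not have age all of $\Ef$, so it is worth arranging at the outset that $X$ realizes every isomorphism type of $\Ef$. Once that is done, feeding $X$ into the proposition and reading off $U_\kappa$ finishes the argument.
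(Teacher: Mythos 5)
Your proof is correct and follows exactly the route the paper intends: the corollary is stated immediately after Proposition~\ref{thm:chain_necessary} with no separate proof, precisely because one reads off $U_\kappa$ from the extensible chain as you do. Your extra care in choosing $X$ with $\Fin(X)=\Ef$ (so that $\Fin(U_\kappa)=\Ef$ rather than merely $\Fin(U_\kappa)\subs\Ef$) is a sensible and correct way to close the one detail the paper leaves implicit.
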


By combining Proposition~\ref{thm:chain_necessary} with Proposition~\ref{thm:countably_extensible} and with the fact that every countable homogeneous structure of age $\Ef$ is isomorphic to $\U$, we obtain the following characterization.

\begin{wn}
	There exists an uncountable homogeneous structure with age $\Ef$ if and only if there exists an extensible $\U$-chain of length $\omega_1$.
\end{wn}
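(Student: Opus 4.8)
The plan is to prove both directions of the equivalence. The forward direction---from the existence of an uncountable homogeneous structure $\Wu$ with age $\Ef$ to the existence of an extensible $\U$-chain of length $\omega_1$---should follow almost immediately from Proposition~\ref{thm:chain_necessary}. Indeed, apply that proposition with $\lambda = \card{\Wu}$ and pick any countably infinite $X \subs \Wu$, so $\lambda_0 = \aleph_0 < \lambda$. We obtain an extensible chain $\set{U_\alpha, G_\alpha, s_\alpha^\beta}_{\alpha < \beta \leq \lambda}$ with $\min\set{\alpha \leq \lambda : \card{U_\alpha} = \kappa} = \kappa$ for every cardinal $\kappa \in (\aleph_0, \lambda]$. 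Applying this with $\kappa = \aleph_1$, we see that $\card{U_{\omega_1}} = \aleph_1$ and that $U_\alpha$ is countable for every $\alpha < \omega_1$. Each $U_\alpha$ with $\alpha < \omega_1$ is a countable homogeneous structure whose age is contained in $\Ef$ (by the chain conditions, $\Fin(U_\alpha) \subs \bigcup_{\beta} \Fin(U_\beta) \subs \Fin(\Wu) = \Ef$), and its age is also cofinal enough: actually by the remark after Proposition~\ref{thm:countably_extensible} (applied in the limit-step analysis), every limit $U_\alpha$ for $\alpha < \omega_1$ has age exactly $\Ef$, hence by \fra's theorem is isomorphic to $\U$. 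For the successor stages we may either argue the same way or simply pass to a cofinal subchain of limit indices, which is again an extensible chain (coherence is preserved under restriction to a subset of indices) and consists of copies of $\U$. Restricting the index set to $[0, \omega_1)$ then yields an extensible $\U$-chain of length $\omega_1$.

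For the converse direction---from an extensible $\U$-chain of length $\omega_1$ to an uncountable homogeneous structure with age $\Ef$---the work is already done: given such a chain $\set{U_\alpha, G_\alpha, s_\alpha^\beta}_{\alpha < \beta < \omega_1}$, apply Lemma~\ref{thm:limit_extension} with $\delta = \omega_1$ to extend it to an extensible chain of length $\omega_1 + 1$, producing a homogeneous limit structure $U_{\omega_1} = \bigcup_{\alpha < \omega_1} U_\alpha$. By the cardinality formula in that lemma, $\card{U_{\omega_1}} = \card{\omega_1} + \sup_{\alpha < \omega_1} \card{U_\alpha} = \aleph_1 + \aleph_0 = \aleph_1$, so it is uncountable; and $\Fin(U_{\omega_1}) = \bigcup_{\alpha < \omega_1} \Fin(U_\alpha) = \bigcup_{\alpha < \omega_1} \Ef = \Ef$ since each $U_\alpha \iso \U$. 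This gives an uncountable homogeneous structure of age $\Ef$, as required.

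The only point needing a little care---and the step I expect to be the main (minor) obstacle---is the reconciliation of ages in the forward direction: one must confirm that in the chain produced by Proposition~\ref{thm:chain_necessary} the countable members have age exactly $\Ef$ rather than merely age contained in $\Ef$. The cleanest route is the observation already flagged in the proof of Proposition~\ref{thm:countably_extensible}: at every limit stage $\alpha < \omega_1$ the structure $U_\alpha$ is countable, homogeneous, and its age absorbs the ages of all earlier stages; since the construction in Proposition~\ref{thm:chain_necessary} exhausts $\Wu$ (so in particular $\bigcup_{\alpha < \omega_1}\Fin(U_\alpha)$ eventually realizes every finitely generated member of $\Ef$ once one notes that each such member embeds into some countable $U_\alpha$), one gets $\Fin(U_\alpha) = \Ef$ cofinally, hence by \fra's theorem $U_\alpha \iso \U$ cofinally often, and passing to that club of indices produces the desired extensible $\U$-chain. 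I would present this as the decisive sentence and leave the bookkeeping about cofinal subchains and coherence-under-restriction as routine.
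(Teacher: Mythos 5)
Your overall route is the paper's: Proposition~\ref{thm:chain_necessary} for the ``only if'' direction and Lemma~\ref{thm:limit_extension} with $\delta=\omega_1$ (as at the end of Proposition~\ref{thm:countably_extensible}) for the ``if'' direction; the latter half of your argument is correct. But the step you yourself flag as the delicate one contains a genuine gap. Starting from an \emph{arbitrary} countably infinite $X\subs\Wu$, Proposition~\ref{thm:chain_necessary} only guarantees that each $U_\alpha$ is homogeneous with $\Fin(U_\alpha)\subs\Ef$; nothing forces $\Fin(U_\alpha)=\Ef$ at the countable stages. Your justification --- each member of $\Ef$ embeds into some \emph{countable} $U_\alpha$ because the construction exhausts $\Wu$ --- fails when $\card{\Wu}>\aleph_1$: exhaustion only places each copy into some $U_\alpha$ with $\alpha<\lambda$, and a given isomorphism type may first be realized only at stages $\geq\omega_1$. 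Concretely, let $\Ef$ be the finite structures in a language with one unary predicate $P$ and let $\Wu$ have $\aleph_2$ points of each kind; if $X$ consists of $P$-points, then the closing-off of Lemma~\ref{thm:closing_off} adds only $P$-points (automorphisms preserve $P$), and if the enumeration of $\Wu$ lists the $P$-points first, every $U_\alpha$ with $\alpha<\omega_1$ can consist entirely of $P$-points, hence is homogeneous with age a proper subclass of $\Ef$ and not isomorphic to $\U$. So neither ``cofinally many limit stages have age $\Ef$'' nor the passage to a club of such stages is available.

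The repair is easy and uses the freedom already present in Proposition~\ref{thm:chain_necessary}: since $\Ef$ is a \fra\ class it has only countably many isomorphism types, each realized by a finitely generated (hence countable, the language being countable) substructure of $\Wu$; choose the initial countable set $X$ to contain generators of a representative of every type. Then $\Ef\subs\Fin(U_0)\subs\Fin(\Wu)=\Ef$, so every $U_\alpha$ has age exactly $\Ef$; every $U_\alpha$ with $\alpha<\omega_1$ is then a countable homogeneous structure of age $\Ef$, hence isomorphic to $\U$ by \fra's theorem, and restricting the chain (together with the coherent maps $s_\alpha^\beta$, whose coherence survives restriction of the index set) to indices below $\omega_1$ gives the desired extensible $\U$-chain. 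With this one change your proof is complete and coincides with the paper's intended argument.
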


\begin{question}
	Suppose $\emb\U$ is non-degenerate.
	Is there an extensible $\U$-chain of length $2$?
	Namely, does there exist a non-trivial embedding $e\maps \U \to \U$ that is $G$-extensible for some $G \subs \aut{\U}$ witnessing the homogeneity?
\end{question}

\subsection{Amalgamating infinite structures} \label{sec:sigma}

It is natural to ask when the amalgamation property holds for the class $\sig{\Ef}$ of countable infinite structures induced by the Fraïssé class $\Ef$.
Many classes have this property -- see Propositions~\ref{PropFinLangE} and~\ref{PROPforPointSeventin} below. 
In fact, the question whether this always happens was informally asked by Michael Pinsker in 2020, during the third author's visit at the TU Wien.
It was also asked much earlier (2011) by Ita\"\i\ Ben Yaacov on MathOverflow, see~\url{https://mathoverflow.net/questions/71389}.
Some counterexamples have already appeared there, including the one from Section~\ref{SecRejnbouss} below. The next fact most likely belongs to the folklore of model theory; we present the proof just for the sake of completeness.

\begin{prop}\label{PropFinLangE}
	Suppose that $\Ef$ is a \fra\ class in a finite relational language.
	Then the class $\sig{\Ef}$ of all countable structures with age in $\Ef$ has the amalgamation property.
\end{prop}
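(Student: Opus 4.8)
The plan is to amalgamate two countable structures $X, Y \in \sig\Ef$ over a common substructure $Z$ by a back-and-forth / compactness-style argument, exploiting that in a \emph{finite relational} language the amalgamation property of $\Ef$ automatically upgrades to amalgamation over arbitrary (even infinite) structures with age in $\Ef$. First I would reduce to the case where $X \cap Y = Z$ literally, with $Z \subs X$ and $Z \subs Y$; the task is then to define a relational structure $W$ on the set $X \cup Y$ extending both $X$ and $Y$ whose age lies in $\Ef$. The only freedom is in choosing the relations on tuples that mix points of $X \setminus Z$ with points of $Y \setminus Z$, since tuples lying entirely in $X$ or entirely in $Y$ already have their relations determined.

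The key step is to build $W$ by a chain argument: enumerate $X = \bigcup_{n} X_n$ and $Y = \bigcup_{n} Y_n$ as increasing unions of finitely generated (hence finite) substructures with $Z_n := X_n \cap Y_n$ increasing to $Z$, and $X_n \cap Y = Z_n = X \cap Y_n$. At stage $n$ we have finite structures $X_n, Y_n \in \Ef$ with common substructure $Z_n \in \Ef$; applying the amalgamation property of $\Ef$ we obtain a finite $W_n \in \Ef$ with embeddings of $X_n$ and $Y_n$ agreeing on $Z_n$. The delicate point is coherence: we need $W_n$ to be (isomorphic over $X_n \cup Y_n$ to) a substructure of $W_{n+1}$, so that $W = \bigcup_n W_n$ is a well-defined structure on $X \cup Y$ restricting correctly to $X$ and to $Y$. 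To arrange this I would not amalgamate $X_{n+1}$ with $Y_{n+1}$ freely, but rather amalgamate the \emph{partial} structure already built on $X_{n+1} \cup Y_{n+1}$ (which extends $X_{n+1}$, extends $Y_{n+1}$, and extends the previously fixed $W_n$ on $X_n \cup Y_n$) — this is again a finite structure whose age lies in $\Ef$ by construction, so $\Ef$-amalgamation over it (say amalgamating it with itself over $W_n$, or simply realizing it inside $\Ef$) fills in the relations on the new mixed tuples while respecting all earlier choices. Taking the union, $W \in \Cl\Ef$; and since $W$ is countable and every finitely generated substructure of $W$ sits inside some $W_n \in \Ef$, we get $\Fin(W) \subs \Ef$, hence $W \in \sig\Ef$. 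The inclusions $X \subs W \sups Y$ are the required amalgamating embeddings.

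The main obstacle I expect is bookkeeping the coherence of the chain $\set{W_n}$: a naive application of $\Ef$-amalgamation at each stage produces $W_{n+1}$ with no control over how $W_n$ sits inside it. The fix, as sketched above, is to always amalgamate the \emph{cumulative} finite structure built so far (on $X_{n+1} \cup Y_{n+1}$, extending $W_n$) rather than starting fresh; since finiteness of the language is relational we only ever add finitely many new relation-instances at each step and the hereditary property of $\Ef$ guarantees every intermediate finite piece is in $\Ef$, so the amalgamation step is always legitimate. A clean alternative I would mention is a compactness argument: form the first-order theory asserting ``$W$ contains $X$, contains $Y$, and every finite substructure embeds into some member of $\Ef$'' — since the language is finite relational this last requirement is expressible and, using that every finite subconfiguration is $\Ef$-amalgamable, the theory is finitely satisfiable, so a model exists and (after passing to a countable substructure containing $X \cup Y$) gives the desired $W$. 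Either route works; I would present the explicit chain construction as the main proof and leave the compactness version as a remark.
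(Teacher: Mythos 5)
Your main argument (the explicit chain construction) has a genuine gap at exactly the point you flag as delicate, and the fix you propose does not repair it. At stage $n+1$ you have a \emph{partial} structure on $X_{n+1}\cup Y_{n+1}$: relations are prescribed on tuples inside $X_{n+1}$, on tuples inside $Y_{n+1}$, and on the mixed tuples inside $X_n\cup Y_n$ (by the previously chosen $W_n$), but not on the new mixed tuples. The amalgamation property of $\Ef$ lets you amalgamate \emph{two} structures over a common substructure; it does not let you complete such a partial structure, nor simultaneously amalgamate three overlapping pieces with prescribed pairwise intersections. Your suggestions (``amalgamating it with itself over $W_n$'', ``realizing it inside $\Ef$'') presuppose that the cumulative partial structure embeds compatibly into a member of $\Ef$, which is precisely what has to be proved. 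And it can genuinely fail: a particular amalgam $W_n$ of $X_n$ and $Y_n$ over $Z_n$ may be a dead end, admitting no extension to an amalgam of $X_{n+1}$ and $Y_{n+1}$ that restricts to it. So the greedy forward construction is not justified. The paper's proof handles exactly this obstacle: after reducing to one-point extensions $X=Z\cup\sn a$, $Y=Z\cup\sn b$, it considers for each $n$ the (finite, because the language is finite relational) set of all amalgams of $Z_n\cup\sn a$ and $Z_n\cup\sn b$, orders them by restriction into an infinite finitely-branching tree, and invokes K\H{o}nig's Lemma to extract an infinite branch, i.e.\ a coherent system of choices. The existence of a non-dead-end branch is obtained non-constructively, not by choosing greedily; this is the idea missing from your main argument. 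The finiteness of the language enters precisely to make the tree finitely branching.

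Your compactness remark, by contrast, is essentially sound and is morally the same argument as the paper's (K\H{o}nig's Lemma being compactness for finitely branching trees): the theory consisting of the diagrams of $X$ and $Y$ (sharing constants for $Z$) together with, for each $k$, the universal sentence forbidding the finitely many $k$-element configurations not in $\Ef$, is finitely satisfiable because each finite fragment is witnessed by an $\Ef$-amalgam of suitable finite pieces $X_n$, $Y_n$ over $Z_n$; a countable model then gives the amalgam. Had you promoted this from a remark to the main proof (and checked that ``age contained in $\Ef$'' is axiomatizable, which again uses finiteness of the relational language), the proposal would be correct. As written, the argument you designate as the main proof does not go through.
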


\begin{pf}
	Fix $Z \in \sig{\Ef}$ and fix embeddings $\map f Z X$, $\map g Z Y$ such that $X,Y \in \sig{\Ef}$ and $X \setminus \img fZ$, $Y \setminus \img gZ$ are singletons.
	We may even assume that the embeddings are inclusions, and $X = Z \cup \sn a$, $Y = Z \cup \sn b$.
	Write $Z = \bigcup_{\ntr}Z_n$, where $Z_0 \subs Z_1 \subs \cdots$ and $Z_n \in \Ef$ for every $\ntr$.
	We know that for every $\ntr$ the embeddings $Z_n \subs Z_n \cup \sn a$, $Z_n \subs Z_n \cup \sn b$ can be amalgamated, namely, there is a structure on $Z_n \cup \dn ab$ extending both of them (possibly identifying $a$ and $b$).
	Since the language is finite, there are only finitely many possibilities.
	Given an amalgamation of $Z_n \cup \sn a$, $Z_n \cup \sn b$, for every $k<n$ there is a uniquely determined amalgamation of $Z_k \cup \sn a$, $Z_k \cup \sn b$, as these are just substructures of $Z_n \cup \dn ab$.
	Thus we can order all the amalgamations this way, obtaining a tree with finite levels. The tree is obviously infinite, therefore by K\H{o}nig's Lemma it has an infinite branch that corresponds to an amalgamation of $X$ and $Y$.
	
	Finally, we proceed by induction, as every embedding of $Z$ into a countable structure can be decomposed into a sequence of one-point extensions.	
\end{pf}

\begin{wn}
	Let $\Ef$ be a \fra\ class in a finite relational language. Then there exists a homogeneous structure of cardinality $\aleph_1$ whose age is $\Ef$.
\end{wn}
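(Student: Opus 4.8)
The plan is to recognize the final corollary as an immediate consequence of Proposition~\ref{PropFinLangE} together with Theorem~\ref{th:mainsimple} (equivalently, Theorem~\ref{thm:main} applied to $\cM = \emb\U$). Let $\U$ denote the \fra\ limit of $\Ef$. We may assume $\U$ is infinite, equivalently that $\Ef$ has infinitely many isomorphism types: an infinite structure in a relational language has infinitely many isomorphism types of finite substructures, and conversely if $\Ef$ is infinite then $\U$, which realizes all of $\Ef$, is infinite. So it suffices to verify the two hypotheses of Theorem~\ref{th:mainsimple} for $\U$, namely that $\U$ is an amalgamation base and that $\emb\U \ne \aut\U$.

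\textbf{The amalgamation base property.} By Proposition~\ref{PropFinLangE}, the class $\sig\Ef$ of all countable structures with age in $\Ef$ has the amalgamation property. By Remark~\ref{rm:amalgamation}, this is equivalent to $\U$ being an amalgamation base in $\sig\Ef$, and it implies that the monoid $\emb\U$ has the amalgamation property. This disposes of the first hypothesis.

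\textbf{Non-degeneracy of $\emb\U$.} Here I would invoke Corollary~\ref{cor:uf}: for a non-principal ultrafilter $p$ on $\omega$, the ultrapower $\U^\omega\by p$ is homogeneous, uncountable, and still has age $\Ef$ (this last point is exactly where the finiteness of the relational language is used, so that the age does not grow). Now a homogeneous structure $X$ is automatically \emph{injective}: given $A \subs B$ in $\Fin(X)$ and an embedding $e \maps A \to X$, choose any embedding $f \maps B \to X$; then $e \cmp (f \rest A)^{-1}$ is an isomorphism between finitely generated substructures of $X$, which by homogeneity extends to an automorphism $\phi$ of $X$, and $\phi \cmp f$ extends $e$. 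Hence $\U^\omega\by p$ is an uncountable injective structure with age $\Ef$, so by Proposition~\ref{thm:necessary} the monoid $\emb\U$ is non-degenerate, i.e. $\emb\U \ne \aut\U$.

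\textbf{Conclusion and main obstacle.} With both hypotheses verified, Theorem~\ref{th:mainsimple} yields a homogeneous structure of cardinality $\aleph_1$ with the same age as $\U$, namely $\Ef$, which is precisely the assertion. I do not anticipate a real obstacle: each step cites a result already available, and the only mildly delicate points are the one-line observation that homogeneous structures are injective (needed to feed Corollary~\ref{cor:uf} into Proposition~\ref{thm:necessary}) and the bookkeeping that a finite relational language prevents the age from growing under the ultrapower. One could instead bypass the ultrapower and argue non-degeneracy of $\emb\U$ directly for a \fra\ limit in a finite relational language, but routing through Corollary~\ref{cor:uf} is the shortest path given what has been proved.
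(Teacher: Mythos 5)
Your proof is correct and assembles the corollary exactly as its placement suggests: Proposition~\ref{PropFinLangE} plus Remark~\ref{rm:amalgamation} show that $\U$ is an amalgamation base (equivalently, that $\emb\U$ has the amalgamation property), and Theorem~\ref{th:mainsimple} then applies. The one hypothesis the paper leaves implicit is the non-degeneracy of $\emb\U$, and your way of discharging it---Corollary~\ref{cor:uf} yields an uncountable homogeneous, hence injective, structure with age $\Ef$, and Proposition~\ref{thm:necessary} converts this into a non-trivial self-embedding---is sound; your one-line observation that homogeneous implies injective is exactly right and is needed to feed the ultrapower into Proposition~\ref{thm:necessary}. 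Note, though, that this particular choice makes Theorem~\ref{th:mainsimple} logically superfluous here: once Corollary~\ref{cor:uf} is invoked you already have an uncountable homogeneous structure with age $\Ef$, and Lemma~\ref{thm:closing_off} (or the downward-closure corollary of Proposition~\ref{thm:chain_necessary}, or the elementary-submodel remark in the introduction) cuts it down to cardinality $\aleph_1$ directly; the interest of deriving the corollary from Proposition~\ref{PropFinLangE} is that the amalgamation method recovers, by a different mechanism, what ultraproducts already give in the finite relational case. One small caveat, shared with the statement as printed: a \fra\ class in a finite relational language can have a finite limit, in which case no structure of cardinality $\aleph_1$ has age $\Ef$; so the reduction to infinite $\U$ at the start of your argument is really an implicit hypothesis of the corollary rather than a harmless normalization.
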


The situation is totally different when we allow algebraic operations or infinitely many relations. We present examples in Section~\ref{SECeroigjwroigj}; one of them is actually classical and well-known: the class of countable locally finite groups.

\bigskip

It is well-known that if the Fraïssé class $\Ef$ has a finite relational language, then $\U$ is $\omega$-categorical, and that if $\U$ is $\omega$-categorical, then it is $\omega$-saturated, which is enough for the amalgamation property of countable structures.

\begin{prop}\label{PROPforPointSeventin}
	If $\U$ is $\omega$-saturated, then $\sig\Ef$ has the amalgamation property.
\end{prop}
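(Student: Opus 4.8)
\textit{Proof proposal.}
The plan is to carry out the amalgamation not inside $\U$ itself but inside a very saturated elementary extension of it, using the $\omega$-saturation hypothesis only to certify that this extension is still ``rich enough''. So the first move is to fix a $\kappa$-saturated elementary extension $\U\preceq\mathfrak{C}$ for some uncountable $\kappa$ (e.g.\ a monster model, or just an $\aleph_1$-saturated one). The two structural facts about $\mathfrak{C}$ that drive everything are: (a) $\Fin(\mathfrak{C})=\Ef$, and more generally $\Fin(M)=\Ef$ for every $M\equiv\U$; and (b) $\mathfrak{C}$ is injective in the sense of the Preliminaries. I would establish (a) and (b) first, since this is where $\omega$-saturation of $\U$ is used.

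For (a): a finitely generated $B$ embeds into $M\equiv\U$ iff its quantifier-free diagram, a quantifier-free type in finitely many variables, is consistent with $\operatorname{Th}(\U)$; by $\omega$-saturation of $\U$ this happens iff $B$ already embeds into $\U$, i.e.\ $B\in\Ef$. For (b): given finitely generated $A=\langle\bar a\rangle\leq\mathfrak{C}$ and $A\subseteq B$ with $B\in\Ef$, first realize $\operatorname{tp}_{\mathfrak{C}}(\bar a)$ in $\U$ by a tuple $\bar a'$ (possible since this type is consistent with $\operatorname{Th}(\U)$ and $\U$ is $\omega$-saturated); then $\langle\bar a'\rangle\cong A$, so by injectivity of $\U$ the structure $B$ embeds into $\U$ over $\bar a'$; transporting back, the quantifier-free type $\operatorname{qftp}_B(\,\cdot\,/\bar a)$ is consistent with $\operatorname{Th}(\mathfrak{C},\bar a)$, hence realized in $\mathfrak{C}$ by saturation, yielding an embedding $B\to\mathfrak{C}$ over $A$.

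Now for the amalgamation. I may assume the two given embeddings are inclusions $Z\subseteq X$, $Z\subseteq Y$ with $X,Y,Z\in\sig\Ef$. By universality of $\U$, fix an embedding $\iota\colon X\to\U\preceq\mathfrak{C}$. The heart of the proof is to extend $\iota\restriction Z$ to an embedding $\psi\colon Y\to\mathfrak{C}$, built by a one-step-at-a-time recursion along an enumeration $Y=\{y_n:n\in\omega\}$: at stage $n$, if $y_n\in Z$ its value is forced to be $\iota(y_n)$, while if $y_n\notin Z$ we realize in $\mathfrak{C}$ the quantifier-free type of $y_n$ over $Z$ together with $\{y_0,\dots,y_{n-1}\}$, transported via $\iota$ on $Z$ and via the values already chosen on the rest. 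This is a quantifier-free type over a countable subset of $\mathfrak{C}$; each finite fragment of it asserts that a certain finitely generated substructure of $\mathfrak{C}$ (isomorphic, by the running invariant, to a member of $\Ef$) extends, inside $\mathfrak{C}$, to a larger member of $\Ef$, which holds by injectivity of $\mathfrak{C}$. Hence the type is finitely satisfiable, so consistent, so realized by $\kappa$-saturation. Once $\psi$ is built, $W:=\langle\iota[X]\cup\psi[Y]\rangle\leq\mathfrak{C}$ is countably generated with age contained in $\Fin(\mathfrak{C})=\Ef$, so $W\in\sig\Ef$, and $\iota\colon X\to W$, $\psi\colon Y\to W$ form the required commutative square over $Z$.

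The main obstacle is precisely the infinitely generated common part $Z$. A naive back-and-forth alternating between $X$ and $Y$ inside $\U$ gets stuck when it reaches a point of $Z$, because the three-way amalgam of an $X$-piece and a $Y$-piece over a $Z$-piece needed there must also be compatible with everything already built, and in a general Fraïssé class it need not be — this is exactly why $\sig\Ef$ can fail to have amalgamation. Passing to a $\kappa$-saturated $\mathfrak{C}$ and embedding all of $X$ before starting on $Y$ circumvents this, since we can then realize the type of a new point of $Y$ over the \emph{whole} of $Z$ at once, pre-empting any later clash; the price is verifying (a) and (b), which is where $\omega$-saturation of $\U$ is indispensable. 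A minor point requiring care is the bookkeeping guaranteeing $\psi$ agrees with $\iota$ on all of $Z$: when placing the value of $y_n\notin Z$ we must respect its relation to every element of $Z$, not merely to finitely many, which is what makes the relevant type one over a countable, rather than finite, parameter set.
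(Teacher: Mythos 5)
Your proof is correct and follows essentially the same route as the paper's: pass to an $\aleph_1$-saturated elementary extension of $\U$, use the $\omega$-saturation of $\U$ to show the extension's age is still $\Ef$, and realize the relevant quantifier-free types over the countable common part, with finite satisfiability supplied by injectivity. The only difference is bookkeeping --- the paper reduces to a single new generator of $Y$ over the common part and iterates, while you embed all of $X$ first and run an $\omega$-recursion over the elements of $Y$ --- which does not change the substance.
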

\begin{proof}
	Let $X \geq A \leq Y$ be structures from $\sig\Ef$.
	Without loss of generality, $X \leq \U$ and $Y$ is generated by $A \cup \set{y}$ for some $y \in Y$.
	Let $\Wu$ be an $\omega_1$-saturated elementary extension of $\U$,
	and let $p(y)$ be the quantifier-free type of $y$ in $Y$ over $A$.
	We have that $p(y)$ is a type in $\Wu$ over $A$ since for every finite $B \subs A$ the quantifier-free formulas from $p(y)$ using only parameters from $B$ are realized by $y$ in the substructure of $Y$ generated by $B \cup \set{y}$, and so also by its isomorphic copy in $\U$ pointwise fixing $B$ (which exists as $\U$ is injective).
	Since $\Wu$ is $\omega_1$-saturated, $p(y)$ is realized by some $y' \in \Wu$.
	Hence, $Y' \leq \Wu$ generated by $A \cup \set{y}$ is an isomorphic copy of $Y$, and so the substructure $Z$ generated by $X \cup Y'$ is an amalgamation of $X \geq A \leq Y$.
	
	It remains to show that $Z \in \sig\Ef$, which reduces to $\Fin(Z) \subs \Ef$.
	It is enough to show that $\Fin(\Wu) = \Fin(\U)$.
	Since $\Wu$ is elementarily equivalent to $\U$ and $\U$ is $\omega$-saturated, for every $W \leq \Wu$ generated by a finite tuple $\bar{w}$ there is a tuple $\bar{w}'$ in $\U$ such that $\seq{\U, \bar{w}'}$ is elementarily equivalent to $\seq{\Wu, \bar{w}}$, and so the substructure of $\U$ generated by $\bar{w}'$ is isomorphic to $W$, hence $W \in \Fin(\U)$.
\end{proof}

\section{Examples}\label{SECeroigjwroigj}

Below we give relevant examples related to the preservation of AP, and in particular an example of a Fraïssé limit in a countable relational language that is not an amalgamation base, but Proposition~\ref{thm:countably_extensible} applies, and so there is an uncountable homogeneous structure with the same age (Section~\ref{sec:main_example}).

The first two examples belong to the folklore of group theory.
The first one actually presents a trivial monoid of embeddings.

\subsection{Locally cyclic groups}\label{SubSectgoijwegijw}

Let $\Gee$ be the class of all cyclic groups.
Then $\Cl{\Gee}$ is the class of all
locally cyclic groups, namely, groups in which every finite (equivalently: at most two-element) set generates a cyclic group.
It is well-known that, up to isomorphism, $\Cl\Gee$ consists of all quotients of subgroups of the rationals $\Qyu$. In particular, $\Cl\Gee$ does not contain any uncountable group. Obviously, the class $\Gee$ fails the amalgamation. For example, no locally cyclic group can contain both $\Zee$ and $\Zee_n$ with $n \in \Nat\setminus \dn01$.

Now let $\Gee_0$ consist of all \emph{torsion-free} cyclic groups. So, up to isomorphism, 
$$\Gee_0 = \dn {\sn0}{\Zee}.$$
Then $\Gee_0$ is a \fra\ class and its \fra\ limit is $\Qyu$. Nevertheless, the monoid $\emb{\Qyu}$ is degenerate, as every self-embedding of $\Qyu$ a multiplication by some rational $q$ and so is an automorphism.
On the other hand, there is a canonical Kat\v etov functor $K$ on $\Gee_0$, namely, $K(\{0\}) = K(\Zee) = \Qyu$, and the natural transformation is just the inclusion. Every embedding in $\Gee_0$ corresponds to a multiplication of the nonnegative generator, which extends uniquely to an embedding---again multiplying by the same element, now in $\Qyu$.

\subsection{Locally finite groups}

Consider the class $\Gee$ of all finite groups.
Recall that a group is \define{locally finite }{locally finite group} if all its finitely generated subgroups are finite.

It is a non-trivial fact that $\Gee$ has the amalgamation property (see \cite{Hall} or \cite{Neu}).
On the other hand, $\sig \Gee$ fails the amalgamation property, as the following example (extracted from \cite{GroShe}, although most likely much older) shows.

Let $G$ be the group of all permutations of $\Nat$ that have a finite support. Namely, $f \in G$ if the set
$$\setof{n \in \Nat}{f(n) \ne n}$$
is finite.
Clearly, $G$ is a locally finite countable group, therefore it belongs to $\sig \Gee$.
Let $\map a \Nat \Nat$ be the permutation defined by $a(2k) = 2k+1$, $a(2k+1) = 2k$ for every $k \in \Nat$.
Next, let $\map b \Nat \Nat$ be defined by $b(0)=0$, $b(2k-1)=2k$ and $b(2k) = 2k-1$ for every $k>0$.
Let $G[a]$, $G[b]$ denote the groups generated by $G \cup \sn a$ and $G \cup \sn b$, respectively (as subgroups of the full permutation group of $\Nat$).
It is easy to verify that both $G[a]$ and $G[b]$ are locally finite, while in any amalgamation $a b$ has an infinite order:
we have $b a (2k\ 2k + 2) a^{-1} b^{-1} = (b(a(2k))\ b(a(2k + 2))) = (2k + 2\ 2k + 4)$ in every amalgamation.

This example leads to the following question.

\begin{question}
	Does there exist an uncountable homogeneous locally finite group?
\end{question}

Note that there exist locally finite groups of arbitrary cardinality.
The question whether there exists a Kat\v{e}tov functor on the class of finite groups seems to be open, although results in the positive direction can be found~\cite{Sh312}.

\subsection{Anti-metric spaces}

Here we describe a simple example of a relational \fra\ class $\Aaa$ for which $\sig \Aaa$ totally fails the amalgamation property.

Namely, a \emph{natural anti-metric space} is a pair $(X,\rho)$ such that $\map \rho {X^2} \Nat$ is a symmetric function, $\rho(x,y)=0$ if and only if $x=y$ and the triangle inequality fails everywhere. The last condition means for every pairwise distinct $x,y,z \in X$, if
$$\rho(x,y) \goe \max \{ \rho(x,z), \rho(z,y)\}$$
then $\rho(x,y) > \rho(x,z) + \rho(z,y)$.
In other words, if $0 < a \loe b \loe c$ denote ``distances'' in a triangle then $c > a+b$. 

Let $\Aaa$ be the class of all finite anti-metric spaces and, as usual, let $\sig \Aaa$ be the class of all countable ones.
It is clear that $\Aaa$ is a \fra\ class that can be encoded in a countable relational language.
The amalgamation property of one-point extensions can be easily seen, noticing that one can also define the ``distance'' between the pair of ``new'' points to be a large enough natural number.

\begin{prop}
	The amalgamation property fails everywhere in $\sig{\Aaa} \setminus \Aaa$, i.e. no $X \in \sig{\Aaa} \setminus \Aaa$ is an amalgamation base.
\end{prop}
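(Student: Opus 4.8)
The plan is to show that an arbitrary countably infinite anti-metric space $X$ admits two one-point extensions with no common amalgam in $\sig\Aaa$. I would first record the obvious reformulation of the axiom: a triple of pairwise distinct points with ``distances'' $A\le B\le C$ is legal if and only if $C>A+B$; in particular the top distance must be the strict (uniquely attained) maximum, since applying the axiom to either of two tied top pairs already fails. This reduces every verification below to arithmetic on three natural numbers.

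Next I would enumerate $X=\set{v_n:\ntr}$ and define a fast-growing sequence recursively by $A_0=1$ and $A_n=1+\max_{k<n}\bigl(A_k+\rho(v_n,v_k)\bigr)$ for $n\ge 1$; it is strictly increasing, so $A_n\to\infty$. Let $Y=X\cup\set a$ with $\rho(a,v_n)=A_n$ and $Z=X\cup\set b$ with $\rho(b,v_n)=A_n+1$, keeping $\rho$ unchanged on $X$. The defining inequality $A_n>A_k+\rho(v_n,v_k)$ makes $A_n$ the strict maximum in every triple $\set{a,v_k,v_n}$ and larger than the sum of the other two sides, so $Y\in\sig\Aaa$; the same inequality shifted by $1$ handles $\set{b,v_k,v_n}$, so $Z\in\sig\Aaa$. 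Since $\rho(a,v_0)=A_0\ne A_0+1=\rho(b,v_0)$, no isomorphism over $X$ carries $a$ to $b$, so an amalgam cannot be obtained merely by identifying the two new points.

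Finally I would suppose $W\in\sig\Aaa$ amalgamates the inclusions $X\hookrightarrow Y$ and $X\hookrightarrow Z$, identify $X$, $a$, $b$ with their images, and note that $a$ and $b$ are distinct in $W$ because they disagree on the distance to $v_0$; hence $d:=\rho_W(a,b)$ is a natural number with $d\ge 1$. Inspecting the triple $\set{a,b,v_n}$, whose side lengths are $d$, $A_n$, $A_n+1$, a short three-case check (on $d\le A_n$, $d=A_n+1$, $d\ge A_n+2$) shows it is legal only when $d>2A_n+1$. Choosing $n$ with $2A_n+1\ge d$ — possible since $A_n\to\infty$ — produces an illegal triple inside $W$, contradicting $W\in\sig\Aaa$. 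Therefore no amalgam exists, so $X$ is not an amalgamation base; as $X$ was an arbitrary member of $\sig\Aaa\setminus\Aaa$, amalgamation fails everywhere there.

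The step I expect to be the crux is balancing the two opposing demands of the construction: legality of the extension $X\cup\set a$ forces the new distances $\rho(a,v_n)$ to dominate the ambient distances and hence to be large, while we simultaneously need the triples $\set{a,b,v_n}$ to exclude \emph{every} possible value of $\rho(a,b)$. Keeping $\rho(b,v_n)$ exactly one more than $\rho(a,v_n)$ resolves this: the set of $\rho(a,b)$-values killed by the triple $\set{a,b,v_n}$ is precisely the interval $[1,2A_n+1]$, and these intervals exhaust $\Nat\setminus\set 0$ exactly because $A_n\to\infty$. Everything else is the routine triangle check in each of the finitely many cases.
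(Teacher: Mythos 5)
Your proposal is correct and follows essentially the same route as the paper: the same two one-point extensions (a fast-growing distance sequence for $a$, and the same sequence shifted by $1$ for $b$), the same observation that $a$ and $b$ cannot be identified, and the same final contradiction that for large $n$ the triple $\set{a,b,v_n}$ satisfies the triangle inequality. Your explicit three-case analysis of $\rho(a,b)$ is a slightly more detailed version of the paper's ``for big enough $n$'' argument, but the underlying idea is identical.
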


\begin{pf}
	Fix a countable infinite anti-metric space $(X,\rho)$. We may assume that $X = \nat$.
	Let $X_a = X \cup \sn a$, where the anti-distance from $a$ is defined by induction as follows.
	We set $\rho(a,0) = 1$.
	Having defined $\rho(a,k)$ for $k < n$, we define $\rho(a,n)$ to be a fixed natural number strictly greater than
	$$\rho(a,i)+\rho(i,n)$$ for all $i<n$.
	By this way, $\{0, \dots, n, a\}$ is an anti-metric space for each $\ntr$ and finally $X_a$ becomes an anti-metric space.
	Next, let $X_b = X \cup \sn b$, where $b \notin X_a$ and define $\rho(b,n) = \rho(a,n)+1$.
	Again, $X_b$ is an anti-metric space.
	
	Suppose $Y = X \cup \dn a b$ is an amalgam of $X_a$, $X_b$ over $X$. Note that we are not allowed to glue $a$ and $b$, because the distances are different.
	On the other hand, if $\rho(a,b) = k$ then for a big enough $n$ we have $\rho(a,n) > k$ and $\rho(b,n) = \rho(a,n)+1 > k$, showing that that triangle inequality between $a,b,n$ actually holds, a contradiction.
\end{pf}

\subsection{Complete labeled graphs with no triangles}\label{SecRejnbouss}

This is an example due to Greb\'\i k~\cite{Gre}, answering a question on the existence of Kat\v{e}tov functors (see~\cite{KubMas}).

Namely, a \emph{triangle-free complete graph} is a complete graph $G$ in which each edge has assigned a fixed natural number (color) and there is no monochromatic triangle.
Denote by $\Gee$ the class of all finite triangle-free complete graphs.
Clearly, this is a \fra\ class in a countable relational language. The amalgamation property follows from the fact that finite graphs admit finitely many colors, therefore one can always amalgamate one-point extensions by assigning a new color to the new pair of vertices.

\begin{prop}
	The amalgamation property fails everywhere in $\sig{\Gee} \setminus \Gee$.
\end{prop}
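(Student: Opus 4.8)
The plan is to follow the pattern of the preceding proposition on anti-metric spaces. I would fix an arbitrary $X\in\sig\Gee\setminus\Gee$, that is, a countably infinite triangle-free complete graph, identify its vertex set with $\omega$ and write $c\maps[\omega]^2\to\omega$ for its colouring, and then produce two one-point extensions $X_a=X\cup\set a$ and $X_b=X\cup\set b$ in $\sig\Gee$ with no common amalgam over $X$; since an amalgamation base must amalgamate \emph{every} pair, this shows $X$ is not one. The observation driving the construction is elementary. Suppose $W\in\sig\Gee$ amalgamates $X_a,X_b$ over $X$, let $\hat a,\hat b\in W$ be the images of $a,b$, and let $\phi,\psi\maps\omega\to\omega$ be the colourings $\phi(n)=c(a,n)$, $\psi(n)=c(b,n)$ of the new edges. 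If $\hat a=\hat b$ then $\phi=\psi$, since both compute the colours of the edges from this common vertex to $X$. If $\hat a\ne\hat b$, let $k\in\omega$ be the colour of the edge $\hat a\hat b$ in $W$; then for any $n$ with $\phi(n)=\psi(n)=k$ the triangle $\set{\hat a,\hat b,n}$ is monochromatic in $W$, which is impossible. So it suffices to choose $\phi,\psi$ so that: (i) for every colour $k$ the sets $\phi^{-1}(k)$ and $\psi^{-1}(k)$ are \emph{$k$-independent} in $X$, i.e.\ contain no edge of colour $k$ (equivalently, neither one-point extension creates a monochromatic triangle, so $X_a,X_b\in\sig\Gee$); (ii) $\phi\ne\psi$; and (iii) every colour $k$ is \emph{realised}, meaning $\phi(n)=\psi(n)=k$ for some $n$.

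Conditions (i) and (iii) are individually trivial — e.g.\ $\phi=\psi=\id\omega$ satisfies both, since $\phi(i)=\phi(j)$ forces $i=j$ — so the whole difficulty, and the main obstacle, is reconciling (ii) with (iii): a colour used to witness the disagreement (ii) must still be realised elsewhere for (iii), yet once $\phi$ has coloured an edge from the disagreement vertex $v$ with colour $m$, it can only reuse $m$ at vertices outside $N_m(v):=\setof{n\ne v}{c(v,n)=m}$. So I would take $v=0$ and first pick two \emph{distinct} colours $m_1\ne m_2$ for which $N_{m_1}(0)$ and $N_{m_2}(0)$ are both co-infinite; this is possible because the sets $N_m(0)$, $m\in\omega$, are pairwise disjoint, hence at most one of them is cofinite. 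Put $\phi(0)=m_1$ and $\psi(0)=m_2$. Then I would define $\phi,\psi$ by recursion on $n\ge1$, keeping $\phi(n)=\psi(n)$ for $n\ge1$, and at each step realising one colour that is not yet realised and is still \emph{available} at step $n$, meaning it does not lie in the finite set of colours forbidden by (i) at $n$. Any colour other than $m_1,m_2$ is available at every step, since it has not been used by $\phi$ or $\psi$ so far; $m_1$ is available at step $n$ exactly when $n\notin N_{m_1}(0)$, and similarly for $m_2$, and as $N_{m_1}(0)\cap N_{m_2}(0)=\emptyset$, at least one of $m_1,m_2$ is available at every step. Processing colours in increasing order (deferring $m_1$ or $m_2$ to a later step whenever the current one is unavailable for it), every colour is eventually realised: only finitely many colours precede a given one, and the sets $N_{m_1}(0),N_{m_2}(0)$ are co-infinite, so $m_1$ and $m_2$ do get realised. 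Once all colours are realised one continues by setting $\phi(n)=\psi(n)$ to the least colour not forbidden by (i), which keeps (i) and totality.

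Granting such $\phi,\psi$, condition (i) gives $X_a,X_b\in\sig\Gee$, and the dichotomy above together with (ii) and (iii) shows no $W\in\sig\Gee$ can amalgamate $X_a,X_b$ over $X$, so $X$ is not an amalgamation base; as $X$ ranged over all of $\sig\Gee\setminus\Gee$, the amalgamation property fails everywhere there. The only real work is the bookkeeping in the recursion reconciling (ii) with (iii); verifying that the one-point extensions lie in $\sig\Gee$ and that a monochromatic triangle is forced in any amalgam is immediate.
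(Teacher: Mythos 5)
Your proposal is correct and follows the same strategy as the paper's proof: two one-point extensions whose colorings agree and jointly realize every color on all but one vertex of $X$ (so that any color assigned to the edge $\{a,b\}$ in an amalgam yields a monochromatic triangle), while disagreeing at a single remaining vertex (so that $a$ and $b$ cannot be identified). The paper reaches this configuration more directly, by relabelling $X$ as $\omega\cup\{\infty\}$ so that $c(\{a,k\})=c(\{b,k\})=k$ for $k\in\omega$ and placing the disagreement at a vertex $\infty$ having two differently coloured incident edges, whereas you build the same kind of colorings by a recursive bookkeeping argument; both are sound.
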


\begin{pf}
	Let $X \in \sig\Gee \setminus \Gee$.
	There is a point $\infty \in X$ such that for two points $x \neq y \in X \setminus \sn \infty$ the edges $\dn x \infty$ and $\dn y \infty$ have different colors.
	Without loss of generality, $X = \omega \cup \sn\infty$, $x = 0$, $y = 1$, and the color of $\dn 0 \infty$ is $1$, while the color of $\dn 1 \infty$ is $0$.
	We consider two one-point extensions $X \cup \sn a$ and $X \cup \sn b$, defined as follows.
	Every edge $\dn i k$, $i \in \dn a b$, $k \in \omega$, has color $k$, and the edge $\dn a \infty$ has color $0$, while $\dn b \infty$ has color $1$.
	
	The extensions are triangle-free as the only possibly monochromatic triangles are $\set{a, 0, \infty}$ and $\set{b, 1, \infty}$.
	The extensions cannot be identified as the colors of $\dn a \infty$ and $\dn b \infty$ disagree.
	Finally, coloring the edge $\dn a b$ by $n$ will give rise to a monochromatic triangle $\set{a, b, n}$.
\end{pf}

Note that every natural anti-metric space can be regarded as a complete labeled graph by treating the distance $\rho(x, y) \in \Nat$ as the color of the edge $\{x, y\}$ for $x \neq y$.
Obviously, the resulting graph will have no monochromatic triangles.

The last two examples (anti-metric spaces and complete labeled graphs) have a bound on cardinalities of structures. Namely, due to the Erd\H{o}s--Rado Theorem, a structure of cardinality $> 2^{\aleph_0}$, whose all pairs have labels from a countable set, admits a triangle (even an infinite substructure) that is monochromatic, namely, all the labels are the same. This clearly shows that there is no anti-metric space of cardinality strictly greater than the continuum. Similarly, there is no triangle-free complete graph of cardinality greater than the continuum.
There exists a natural anti-metric space of cardinality $2^{\aleph_0}$. Indeed, consider $2^\omega$ and define $\rho(x,y) = 3^n$, where $n$ is the minimal coordinate such that $x(n) \ne y(n)$.

\subsection{The linearly ordered variant} \label{sec:main_example}

We consider a linearly ordered modification of the previous example. In fact, the ordered version is the actual example of Grebík~\cite{Gre}.
Formally, by an ordered complete labeled graph we mean a structure $\seq{X, c_X, <_X}$ where $X$ is a set, $\map {c_X} {\dpower{X}{2}} \omega$ is a coloring of all edges, and $<_X$ is a linear order on $X$.
As usual we often omit the subscripts of $c_X$ and $<_X$, and we identify $X$ with the whole structure.

Let $\Gee^<$ denote the class of all finite triangle-free ordered complete labeled graphs, so $\sig\Gee^<$ consists of all countable triangle-free ordered complete labeled graphs, and $\Cl{\Gee^<}$ consists of all triangle-free ordered complete labeled graphs.
It is easy to see that $\Gee^<$ is a Fraïssé class.
Let $U$ denote its limit.
We have the following.
\begin{enumerate}
	\item No $X \in \sig\Gee^< \setminus \Gee^<$ is an amalgamation base in $\sig\Gee^<$, in particular $\emb{U}$ does not have the amalgamation property.
	This is easy to see: consider two one-point extensions $X \cup \set{a}$ and $X \cup \set{b}$ such that $c(\set{x, a}) = c(\set{x, b})$ attains all values in $\omega$ for $x \in X$, while for some $x \in X$ we have $a < x$ in $X \cup \set{a}$, but $x < b$ in $X \cup \set{b}$.
	Hence, the points $a$ and $b$ cannot be glued and putting $c(\set{a, b}) = q$ would introduce a monochromatic triangle for every $q \in \omega$.
	\item There is no Katětov functor $\sig\Gee^< \to \sig\Gee^<$. This was proved by Grebík~\cite{Gre}.
	\item \label{itm:colors_extensible} For every countable subgroup $G \leq \aut U$ there is a non-trivial $G$-extensible embedding $\map e U U$, and so there is an uncountable homogeneous triangle-free ordered complete labeled graph.
\end{enumerate}
We adapt the proof of universality of $\aut U$ from \cite{Gre} to show \ref{itm:colors_extensible}.

A one-point extension of $X \in \Cl{\Gee^<}$ is without loss of generality of the form $X \cup \set{\alpha}$ where $\map \alpha X \omega$ is the map defined by $\alpha(x) = c(x, \alpha)$, so to fully describe $X \cup \set{\alpha}$ it is enough to remember the map $\alpha$ and the linear order on $X \cup \set{\alpha}$ denoted by $<_\alpha$.
A one-point extension $\alpha$ of $X$ is called 
\begin{itemize}
	\item \emph{$Q$-colored} for $Q \subs \omega$ if $\map \alpha X \omega$ takes values only in $Q$,
	\item \emph{finitary} if every color support $\alpha^{-1}(q)$ for $q \in \omega$ is finite.
\end{itemize}

\begin{lm}
	For every $X \in \Cl{\Gee^<}$ and co-infinite $Q \subs \omega$ there is an extensible embedding $\map e X E(X)$ for some $E(X) \in \Cl{\Gee^<}$ with an extension operator $\map E {\aut{X}} {\aut{E(X)}}$ such that every finitary $Q$-colored one-point extension of $X$ is uniquely realized in $E(X)$.
\end{lm}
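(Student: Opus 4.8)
First I would realize all the desired extensions by a single point each. Let $P$ be the set of finitary $Q$-colored one-point extensions of $X$, each $p\in P$ presented as a finite-fibered map $\alpha_p\maps X\to Q$ together with the downward closed set $L_p\subs X$ of points meant to lie below the new point (and such that $X\cup\set{x_p}$ is triangle-free). Put $E(X):=X\sqcup\setof{x_p}{p\in P}$ with $X$ a substructure, $c(x_p,y):=\alpha_p(y)$ for $y\in X$, with $x_p$ inserted in the order of $X$ at the cut $L_p$, and with the edges among distinct new points colored in $\omega\setminus Q$ — to be specified. Since the $\alpha_p$ take values in $Q$, this last choice already forbids a monochromatic triangle with exactly one new vertex, so only triangles on three new vertices will need attention when verifying $E(X)\in\Cl{\Gee^<}$. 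The embedding $e$ is the inclusion; $\aut X$ acts on $P$ by $h\cdot p:=(\alpha_p\cmp h^{-1},h[L_p])$; and I would define $E(h)$ by $E(h)|_X:=h$ and $E(h)(x_p):=x_{h\cdot p}$, which is forced because the quantifier-free type of $x_p$ over $X$ is $p$ and each $p$ is realized by a unique point. The key technical remark is that $\aut X$ acts \emph{freely} on $P$: if $h$ fixes $\alpha_p$ it preserves each finite fibre $\alpha_p^{-1}(q)$ and, being order-preserving, fixes it pointwise, so $h=\id X$. (If $X$ is uncountable there are no finite-fibered $\alpha\maps X\to Q$, so $P=\emptyset$ and the lemma is trivial; hence assume $X$ countable.)

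For $E(h)$ to be an automorphism of $E(X)$ we need the order on the new points to be $\aut X$-equivariant and the coloring $c(x_p,x_{p'})$ to be $\aut X$-invariant and triangle-free. The order is the easy part: between new points with different cuts it is forced (a point of $L_{p'}\setminus L_p$ separates $x_p$ from $x_{p'}$ when $L_p\subsetneq L_{p'}$), and $\aut X$ permutes cuts order-preservingly, so it suffices to linearly order the set $A$ of admissible color functions $\aut X$-invariantly and to break ties between same-cut new points by that order. For that, fix an enumeration $Q=\setof{q_i}{i\in\omega}$ — on which $\aut X$ does not act — identify $\alpha\in A$ with the sequence $(\alpha^{-1}(q_i))_{i\in\omega}\in(\fin X)^\omega$, order $\fin X$ $\aut X$-invariantly (by size, then lexicographically along $<_X$), and compare two $\alpha$'s lexicographically along $\omega$; invariance holds because automorphisms of $X$ preserve $<_X$ and fix the index set.

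The genuine obstacle is the $\aut X$-invariant triangle-free coloring of the complete graph on $P$ into $\omega\setminus Q$, and here the proof must follow and adapt Grebík's argument for the universality of $\aut U$ from~\cite{Gre}. There is room — $\omega\setminus Q$ is infinite, and any set of size at most $2^{\aleph_0}$, in particular $P$, admits a triangle-free $\omega$-coloring of its complete graph, e.g. the anti-metric one of Section~\ref{SecRejnbouss} — but the coloring must be constant on $\aut X$-orbits of pairs while still never assigning all three edges of a triangle of new points the same color. I would split $\omega\setminus Q$ into two infinite blocks and color in two layers: pairs with $\alpha_p\ne\alpha_{p'}$ via the $\aut X$-invariant tree structure on $A\subs(\fin X)^\omega$, encoding the level of first disagreement together with enough $\aut X$-invariant information about the splitting there to separate the three edges of any triangle with pairwise distinct color functions; and pairs with $\alpha_p=\alpha_{p'}$ (hence $L_p\ne L_{p'}$) by transporting a fixed triangle-free coloring of the complete graph on the cuts of $X$ along the free $\aut X$-action on the orbit of $\alpha_p$ — freeness is exactly what makes this transport well defined. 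Using disjoint colors for the two layers takes care of triangles whose three new points do not all share one color function, while inside each layer triangle-freeness is arranged directly. Certifying that this really leaves no monochromatic triangle on three new vertices is the delicate combinatorial heart of the argument.

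With the coloring in hand the proof closes: $E(X)\in\Cl{\Gee^<}$ by the triangle analysis above; the maps $E(h)$ are automorphisms of $E(X)$ restricting to $h$ on $X$, so the inclusion $e\maps X\subs E(X)$ is extensible (with $E$ in fact a homomorphism $\aut X\to\aut{E(X)}$); and every finitary $Q$-colored one-point extension $p$ of $X$ is realized in $E(X)$ exactly by $x_p$, since the quantifier-free type of any new point over $X$ identifies which $x_{p'}$ it is.
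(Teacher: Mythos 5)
Your setup coincides with the paper's: the same set of new points (one for each finitary $Q$-colored one-point extension), the same action of $\aut X$ on them, and essentially the same invariant linear order (compare cuts first, then compare the sequences of finite fibres $(\alpha^{-1}(q_i))_{i\in\omega}$ lexicographically, using the size-then-lex order on $\fin X$). You also correctly isolate the decisive fact that this action is \emph{free}: an automorphism fixing a colouring $\alpha$ preserves each finite fibre $\alpha^{-1}(q)$ and, being order-preserving, fixes it pointwise, hence is $\id{X}$.

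The gap is that you do not deploy this freeness where it is actually needed, namely for the colouring of edges between new points. The paper simply assigns distinct colours from $\omega\setminus Q$ to distinct $\aut X$-orbits of pairs; then a monochromatic triangle $\alpha<\beta<\gamma$ forces $\set{\alpha,\beta}$ and $\set{\alpha,\gamma}$ into one orbit, the witnessing automorphism must fix $\alpha$ because it preserves the invariant order, and by the very freeness you proved it is the identity, whence $\beta=\gamma$ --- contradiction. No further combinatorics is needed. Your two-layer scheme instead leaves the hard case open: for a triangle of pairwise distinct colour functions, the ultrametric inequality only guarantees that at least \emph{two} of the three first-disagreement levels agree, so all three can coincide, and you would then need an $\aut X$-invariant function separating pairs of distinct finite subsets of $X$ at that level --- essentially the original problem one level down --- which you explicitly decline to certify. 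As written the proposal therefore does not establish that $E(X)$ is triangle-free, which is the heart of the lemma. (Your instinct that the colouring is the delicate point is not unfounded: assigning distinct colours to distinct orbits tacitly assumes there are only countably many orbits of pairs, which is clear in the paper's eventual application, where only countably many new points closed under a countable group are retained, but is less obvious for the full continuum-sized $E(X)$.)
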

\begin{proof}
	Let $E(X) = X \cup \set{\alpha: \alpha$ a finitary $Q$-colored one-point extension of $X}$ and let $\map e X {E(X)}$ be the inclusion.
	Clearly the canonical action of $\aut X$ on $X$ can be naturally extended to an action on $E(X)$: for $g \in \aut X$ and $\alpha \in E(X) \setminus X$ we let $g(\alpha)$ be the coloring with fibers $g(\alpha)^{-1}(q) = g[\alpha^{-1}(q)]$ for every color $q \in \omega$ so that $c(\set{x, \alpha}) = c(\set{g(x), g(\alpha)})$, and we let $<_{g(\alpha)}$ be so that $x <_\alpha \alpha \Leftrightarrow g(x) <_{g(\alpha)} g(\alpha)$ for every $x \in X$.
	This defines the extension $E(g)$ and the extension operator $E$.
	
	It remains to turn $E(X)$ into a triangle-free ordered complete labeled graph so that every $E(g)$ is an automorphism of $E(X)$.
	First we define a linear order on $E(X)$.
	The order is already defined on every $X \cup \set{\alpha}$.
	We extend the linear order on $X$ to a linear order on $\fin{X}$: for $A \neq B \in \fin{X}$ we put $A <' B$ if $|A| < |B|$ or if $|A| = |B| = n$ and $a_j < b_j$ where $\set{a_i: i < n} = A$ and $\set{b_i: i < n} = B$ are $<$-increasing enumerations of $A$ and $B$ and $j = \min\set{i < n: a_i \neq b_i}$.
	Then for $\alpha \neq \beta \in E(X) \setminus X$ we put $\alpha < \beta$ if there is $x \in X$ with $\alpha <_\alpha x <_\beta \beta$ or if $\set{x \in X: x <_\alpha \alpha} = \set{x \in X: x <_\beta \beta}$ and $\alpha^{-1}(r) <' \beta^{-1}(r)$ where $r = \min\set{q \in \omega: \alpha^{-1}(q) \neq \beta^{-1}(q)}$. Here we are using that the color supports $\alpha^{-1}(q)$ and $\beta^{-1}(q)$ are finite.
	This is a well-defined linear order that is invariant under automorphisms of $X$ (i.e. $\alpha < \beta \Leftrightarrow g(\alpha) < g(\beta)$ for every $g \in \aut X$) since the base orders $<_\alpha$, $<_\beta$, and $<'$ are invariant.
	
	Finally we need to color all the edges $\set{\alpha, \beta}$ for $\alpha < \beta \in E(X) \setminus X$ in a way such that every $g \in \aut X$ still induces an automorphism of $E(X)$ while not creating any monochromatic triangles.
	The invariance under automorphisms means that $c(\set{g(\alpha), g(\beta)}) = c(\set{\alpha, \beta})$, i.e. every orbit of the action of $\aut(X)$ on two-point subsets of $E(X) \setminus X$ has to be monochromatic.
	Since $\omega \setminus Q$ is infinite, we can color edges in different orbits by different colors from $\omega \setminus Q$.
	This also means that monochromatic triangles may exist only in $E(X) \setminus X$.
	
	We show that there are no monochromatic triangles in $E(X) \setminus X$.
	Suppose for contradiction that $\alpha < \beta < \gamma$ is a monochromatic triangle.
	So there is an automorphism $g \in \aut E(X)$ that maps $\set{\alpha, \beta} \to \set{\alpha, \gamma}$.
	Since $g$ preserves the order, we have $g(\alpha) = \alpha$.
	So $g$ maps the finite linearly ordered set $\alpha^{-1}(q)$ for $q \in \omega$ onto itself, and hence $g$ fixes $\alpha^{-1}(q)$ pointwise.
	Since $q \in \omega$ was arbitrary, we have $g = \id{X}$ and $\gamma = g(\beta) = \beta$, which is a contradiction.
\end{proof}

Of course even for a countable graph $X$ the extension $E(X)$ has cardinality continuum.
However we can stay countable if we restrict ourselves to countably many automorphisms and countably many one-point extensions, which turns out to be enough for our purposes.
\begin{wn} \label{omega_graphs_countable_extension}
	For every $X \in \sig\Gee^<$, countable $G \leq \aut X$, co-infinite $Q \subs \omega$, and a countable family of finitary $Q$-colored one-point extensions $A$, there is a $G$-extensible embedding $\map f X Y$ for some $Y \in \sig\Gee^<$ with an extension operator $G \to \aut{Y}$ such that every one-point extension $\alpha \in A$ is uniquely realized in $Y$.
\end{wn}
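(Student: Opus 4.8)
The plan is to obtain $Y$ as a countable induced substructure of the continuum-sized extension $E(X)$ produced by the Lemma, cutting it down to exactly the part needed to realize $A$ and to carry the $G$-action. Concretely, I would first invoke the Lemma for $X \in \sig\Gee^< \subs \Cl{\Gee^<}$ and the given co-infinite $Q$, obtaining $E(X) \in \Cl{\Gee^<}$, the inclusion $\map e X {E(X)}$, and the extension operator $\map E {\aut X}{\aut{E(X)}}$ --- which is nothing but the canonical action of $\aut X$ extended to $E(X)$, hence a group homomorphism --- under which every finitary $Q$-colored one-point extension of $X$, in particular every $\alpha \in A$, is uniquely realized in $E(X)$.

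Next I would set $S = X \cup \setof{E(g)(\alpha)}{g \in G,\ \alpha \in A}$ and let $Y$ be $S$ equipped with the coloring and linear order inherited from $E(X)$. Since $G$ and $A$ are countable, $S$ is countable, so $Y$ is a countable ordered complete labeled graph; being an induced substructure of the triangle-free $E(X)$ it is triangle-free, whence $Y \in \sig\Gee^<$. As $X \subs S$, the inclusion $\map f X Y$ is an embedding, and each $\alpha \in A$ lies in $Y$, realizing over $X$ precisely the one-point extension it names; this realization is unique, for a second realization inside $Y \subs E(X)$ would contradict the uniqueness clause of the Lemma. The reason for closing $A$ under the orbit maps $E(g)$ is that $S$ is then invariant under $E(g)$ for every $g \in G$: since $G$ is a group and $E$ a homomorphism, $E(g)$ fixes $X$ setwise and carries $E(h)(\alpha)$ to $E(gh)(\alpha) \in S$, and likewise $E(g^{-1})$ maps $S$ into $S$. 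Consequently each $E(g)\rest Y$ is an automorphism of $Y$, and $g \mapsto E(g)\rest Y$ is the required extension operator $G \to \aut Y$; it witnesses $G$-extensibility of $f$ since $(E(g)\rest Y)\cmp f = E(g)\rest X = g = f\cmp g$ for every $g \in G$.

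I do not expect a genuine obstacle here: the entire analytic content is already contained in the Lemma, and the only step requiring a moment's care is the verification that the chosen subset $S$ is closed under the automorphisms $E(g)$, $g \in G$ --- this is exactly why one adjoins the whole $G$-orbit of $A$ rather than $A$ alone --- after which restriction to $Y$ yields honest automorphisms, and everything else (triangle-freeness of $Y$, the embedding $f$, uniqueness of the realizations, and the cardinality bound $\card S \le \aleph_0$) is inherited verbatim from $E(X)$.
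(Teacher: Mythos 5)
Your proposal is correct and follows essentially the same route as the paper: the paper's proof likewise takes $Y = X \cup \set{g(\alpha): \alpha \in A,\ g \in G}$, i.e.\ the closure of $A$ under the $G$-action inside $E(X)$, and restricts the inclusion $e$ and the operators $E(g)$ to this countable invariant substructure. Your additional verifications (countability, triangle-freeness, uniqueness of realizations, and invariance of $S$ under $E(g)$ and $E(g^{-1})$) are exactly the details the paper leaves implicit.
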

\begin{proof}
	Let $\map e X {E(X)}$ be the extensible embedding from the previous lemma.
	Without loss of generality $A \subs E(X) \setminus e[X]$.
	It is enough to take $Y = X \cup \set{g(\alpha): \alpha \in A, g \in G}$, i.e. we close $A$ under the action of our countable subgroup $G$.
	Then we let $\map f X Y$ be the restriction of $\map e X {E(X)}$.
\end{proof}

\begin{proof}[Proof of \ref{itm:colors_extensible}]
	Let $G \leq \aut U$ be a countable subgroup.
	Let us write $\omega$ as an increasing union of subsets $\bigcup_{n \in \omega} Q_n$ such that every $Q_{n + 1} \setminus Q_n$ is infinite.
	We put $X_0 = U$ and $G_0 = G$.
	Suppose $X_n \in \sig\Gee^<$ and countable $G_n \leq \aut X_n$ are defined.
	Let $A_n$ be a countable family of finitary $Q_n$-colored one-point extensions of $X_n$ such that every $Q_n$-colored one-point extension $\alpha_0$ of every finite substructure $F \subs X_n$ can be extended to an extension $\alpha \in A_n$.
	This is possible: we put $\alpha(x) = \alpha_0(x)$ for $x \in F$ and for every $x \in X \setminus F$ we use a different color from $Q_n \setminus \alpha_0[F]$ for $\alpha(x)$.
	Hence $X \cup \set{\alpha}$ is triangle-free.
	We can extend the order e.g. by putting $x <_\alpha \alpha$ if there is $y \in F$ such that $x \leq y <_{\alpha_0} \alpha_0$, so $\alpha$ becomes the immediate successor of $\max\set{y \in F: y <_{\alpha_0} \alpha_0}$ in $X \cup \set{\alpha}$.
	
	Let $\map {e_n} {X_n} {X_{n + 1}}$ be a $G_n$-extensible embedding with an extension operator $\map {E_n} {G_n} {\aut X_{n + 1}}$ obtained from Corollary~\ref{omega_graphs_countable_extension} applied to $X_n$, $G_n$, $Q_n$, and $A_n$, and let $G_{n + 1} = E[G_n] \leq \aut X_{n + 1}$.
	Without loss of generality, $X_{n + 1}$ uses only colors from $Q_{n + 1}$ since otherwise we may change the coloring $c_{X_{n + 1}}$ so that every use of a color from $\rng(c_{X_{n + 1}}) \setminus Q_n$ is replaced by a corresponding color from the infinite set $Q_{n + 1} \setminus Q_n$.
	This concludes the inductive definition.
	Let $X_\infty$ and the family of embeddings $\map {e_n^\infty} {X_n} {X_\infty}$ be the colimit of the sequence $\vec{e}$.
	The embedding $\map {e_0^\infty} U {X_\infty}$ is non-trivial (as every $e_n$ is non-trivial) and $G$-extensible: for every $g \in G$ the maps $E_0^n(g) \in \aut X_n$ are extending each other via the bonding maps $e_n$, and so the colimiting map $g_\infty \in \aut X_\infty$ satisfies $g_\infty \cmp e_0^\infty = e_0^\infty \cmp g$.
	
	It is enough to observe that $X_\infty$ is isomorphic to $U$.
	We show that $X_\infty$ is injective.
	Let $\alpha_0$ be a one-point extension of a finite substructure $F \subs X_\infty$.
	There is $n \in \omega$ such that $F \subs X_n$ and $\alpha_0$ is $Q_n$-colored.
	Hence, there is $\alpha \in A_n$ extending $\alpha_0$, and so $\alpha_0$ is realized in $X_{n + 1}$.
	
	Therefore, there is an uncountable homogeneous triangle-free ordered complete labeled graph by Proposition~\ref{thm:countably_extensible}.
\end{proof}

\section{Final remarks}

Let us note that our main result can be proved in a much more general setting, replacing embeddings by more abstract morphisms. The only ingredient needed is that partial isomorphisms between ``small'' structures are ``captured'' in intermediate steps. Of course, one also needs to specify what ``uncountable'' means. A good framework is an $\omega$-accessible category, in the sense of Grothendieck~\cite{Groth} (complemented by Gabriel and Ulmer~\cite{GabUlm}). In this setting finitely generated structures are finitely presentable objects and morphisms from them factor through directed colimits. The monoid of self-embeddings is simply a hom-set of the form $\operatorname{Hom}(X,X)$ and all morphisms are typically assumed to be monomorphisms.

Coming back to our concrete setting, let us recall the main intriguing questions left open:

\begin{question}
	Assume $\Ef$ is a \fra\ class such that there exist uncountable structures with age $\Ef$. Does there exist an uncountable homogeneous structure with age $\Ef$?
\end{question}

\begin{question}
	Assume $\U$ is a \fra\ limit such that $\emb{\U}$ is non-degenerate. Does there exist a non-trivial extensible self-embedding of $\U$?
\end{question}

In fact, we do not know the answer to a weaker question, asking for $G$-extensibility for every countable $G \loe \aut{\U}$.

\separator

\paragraph{Acknowledgments.} 

We would like to acknowledge that Mirna Džamonja was a co-author of the originally submitted version of the paper. The authors are extremely grateful to Mirna Džamonja for bringing the main question and for numerous discussions.

Research of Adam Bartoš and Wiesław Kubiś was supported by GA ČR (Czech Science Foundation) grant EXPRO 20-31529X and by the Czech Academy of Sciences (RVO 67985840).

The authors would like to thank Nate Ackerman, David Bradley-Williams, and Christian Pech for fruitful discussions during the Midsummer Combinatorial Workshop XXIX (July 29 -- August 2, 2024, Prague), in particular, leading to Proposition~\ref{PROPforPointSeventin}. Special thanks are due to the Organizers of the workshop mentioned above.


\end{document}